\title{Time-domain multiscale shape identification in electro-sensing \thanks{\footnotesize
    This work was supported by ERC Advanced Grant Project MULTIMOD--267184.}}  \author{Habib Ammari\thanks{\footnotesize Department of
    Mathematics and Applications, Ecole Normale Sup\'erieure, 45 Rue d'Ulm, 75005 Paris, France
    (habib.ammari@ens.fr, han.wang@ens.fr).} \and Han Wang\footnotemark[2]}
\begin{document}

\maketitle

\begin{abstract}
  This paper presents premier and innovative time-domain multi-scale method for shape
  identification in electro-sensing using pulse-type signals. The method is based on
  transform-invariant shape descriptors computed from filtered polarization tensors at
  multi-scales. The proposed algorithm enjoys a remarkable noise robustness even with
  far-field measurements at very limited angle of view. It opens a door for pulsed imaging
  using echolocation and induction data.
\end{abstract}

\bigskip

\noindent {\footnotesize Mathematics Subject Classification
  (MSC2000): 35R30, 35B30}

\noindent {\footnotesize Keywords: weakly electric fish, electrolocation, shape
  classification, spectral induced polarization, location search, pulse-type signal}

\section{Introduction}
Weakly electric fish orient themselves at night in complete darkness by employing their
active electrolocation system. They generate a stable, high-frequency, weak electric field
and perceive the transdermal potential modulations caused by a nearby target with
different electromagnetic properties than the surrounding water \cite{bastian,
  lissmann1958mechanism, moller1995, von1993electric}. Depending on the waveform of the
source (\ie the electric organ discharge) which is a result of the evolution and is
adapted to the habitat, weakly electric fish can be classified into the wave-type and the
pulse-type \cite{bastian}.  The first emit a sinusoidal-like signal while the second emit
brief pulses.  Both types of fish have to solve the electro-sensing problem: locate the
target and identify its shape and electromagnetic parameters given the current
distribution over the skin. Due to the fundamental ill-posedness character of this imaging
problem, it is very intriguing to see how much information weakly electric fish are able
to recover \cite{budelli2000electric, mciver3, maciver2001computational, maciver2001prey,
  mciverreview, mciver, expt, von1999active, gerhard, vonreview, von2007distance}.

A solution to the electric-sensing problem relies on differential imaging, {\it i.e.}, by
forming an image from the perturbations of the field due to targets, and physics-based
classification. The electric field due to the target is a complicated highly nonlinear function
of its shape, electromagnetic parameters, and distance from the fish. Differential imaging helps us to
understand analytically the electric sense of the weakly electric fish.

In a recent paper \cite{ammari_modeling_2013} a mathematical model of the fish has been
established. Based on this model, a multifrequency shape recognition algorithm for wave-type
fish has been proposed in \cite{ammari_shape_2014}. To summarize, the wave-type fish would
first locate the target using a specific frequency-space location search algorithm. Then it
could extract, from the perturbations of the electric field, the polarization tensors of the
target at multiple frequencies. The material parameters of the target can be computed from
these extracted features. Finally, the fish might classify a target by comparing normalized
singular values of the polarization tensors with those of a set of learned shapes. These
geometric features extracted from the data are invariant under rigid motions and scaling of the
target and therefore, they yield shape descriptors which allow the comparison and
identification of the target in a dictionary of shapes.  % Moreover, in the case where the
% permittivity of the surrounding medium can be neglected, the fish could simply eliminate the
% background field by considering only the imaginary part of the filed and hence simplify the
% computation complexity and improve further the stability of the classification process.

In this paper, we study the problem of shape identification using pulse-type
signals. Compared to previous investigations on wave-type electro-sensing, the present
model is more complex and appears to be more realistic since shape identification performs
much better even with a limited-view aspect and highly noisy data.

The overall procedure of electro-sensing is similar to the wave-type electro-sensing
described above. However, unlike the wave-type electro-sensing where the solution of the
forward problem in the frequency domain is separable and can be treated independently for
each frequency, the shape identification problem using pulse-type signals has to be
treated directly in the time domain hence is more challenging. On the other hand, the
pulse-type signal contains more information from a frequency point of view and is expected
to give a better performance than wave-type signals in shape identification.

The paper is organized as follows. We first introduce  some notation. Then in
section \ref{sec:electro-sens-model} we establish a simplified electro-sensing model in the time domain. Section \ref{sec:repr-solut}
gives the representation of the solution. Section \ref{sec:time-depend-polar} is to formulate an asymptotic
expansion of the perturbed field which allows the reconstruction of the filtered
generalized polarization tensors (GPTs) from data. 

Based on the polarization tensor, a time domain multi-scale shape descriptor is introduced
in section \ref{sec:shape-ident-with} and its performance is analyzed through numerical
experiments in section \ref{sec:numer-exper}. The paper ends with a few concluding
remarks.

Throughout this paper, we denote by $\Gamma$ the fundamental solution of the Laplacian in $\R^d$ with $d=2,3$, which satisfies $\Delta
\Gamma =\delta_0$ (where $\delta_0$ is the Dirac function at the origin) and is given by
\begin{align}
  \label{eq:def_green}
  \Gamma(x) := 
  \begin{cases}
    \frac{1}{2\pi} \log\abs{x}, & d=2,\\
    -\frac{1}{4\pi} \frac 1{\abs{x}}, & d=3.
  \end{cases}
\end{align}

For a Banach space $X$ equipped with the norm $\normx{\cdot}$, we define the Schwartz
space $\SzRX$ as follows
\begin{align}
  \label{eq:Schwartz_space}
  \SzRX := \Set{\phi:\R\to X \text{ is } \Cinf, \text{ and } \pab(\phi)<\infty, \forall
    a,b\in\NN},
\end{align}
where the semi norms $\pab$ for $a,b\in\NN$ are defined as
\begin{align}
  \label{eq:pab_norms}
  \pab(\phi) = \sup_{t\in\R} \,\abs{t}^a \normx{\phi^{(b)}(t)}.
  % \pab(\phi) = \sup_{t\in\R} \Abs{t^\alpha \normx{\phi^{(\beta)}(t)}}
\end{align}
We denote by $\SzRXp$ the space of tempered distributions. The Fourier transform defined
as $\displaystyle{\hat\phi(\omega) = \int_\R \phi(t) e^{-it\omega}\, dt}$ for a function
of $\SzRX$ is always carried out on the time variable $t$, and for a distribution of
$\SzRXp$ it is defined by the duality. In both cases the Fourier transform is a
homeomorphism on the corresponding space.

% Let $\R_+:=\set{t\geq 0}$.
We denote $\LtIX$ the space of square integrable functions $\phi:\Intv\rightarrow X$, and equip
it with the norm
\begin{align}
  \label{eq:time_X_space_L2}
  \norm{\phi}_\LtIX := \Paren{\int_\Intv\normx{\phi(t)}^2\, dt}^{1/2},
\end{align}
Similarly, we denote $H^1(\Intv; X)$ the space of all $\phi\in\LtIX$ such that the weak
derivative $\p_t \phi\in\LtIX$ and equip it with the norm
\begin{align}
  \label{eq:time_X_space_H1}
  \norm{\phi}_\HoIX := \Paren{\norm{\phi}_\LtIX^2 + \norm{\phi'}_\LtIX^2}^{1/2}.  
  % \norm{\phi}_\HoIX = \Paren{\int_\Intv\norm{\phi(t)}_X^2 dt + \int_\Intv\norm{\phi'(t)}_X^2
  % dt}^{1/2}.
\end{align}
Throughout the paper we will write interchangeably $\phi'(t,x)$ and $\p_t \phi(t,x)$ for
the derivative in the time variable $t$ (similarly $\hat\phi'(\omega,x)$ and
$\p_\omega\hat\phi(\omega,x)$ for the derivative in the frequency variable $\omega$ for
the Fourier transform of $\phi$). We call a function $\phi$ \emph{causal} if $\phi(t) = 0$
for $t<0$. Particularly, $\phi\in\SzRX$ being causal implies $\phi^{(k)}(0)=0$ for any
$k\geq 0$.

\section{Electro-sensing model}
\label{sec:electro-sens-model}

We consider in this paper the electro-sensing problem in the free space $\R^d$ with point
sources and receivers, which is easier to analyse compared to the complete  model of fish
established in \cite{ammari_modeling_2013,ammari_shape_2014}. Before proceeding to the
results of existence and uniqueness of the solution as well as its representation, we want
to insist on the fact that the same type of results can be established in a similar way
for the model of \cite{ammari_modeling_2013}, in particular the shape identification
algorithm discussed in section \ref{sec:shape-ident-with} remains unchanged and applies to
any model as long as the same feature is extracted.

A target $D$ is an open bounded set in $\R^d, d=2,3$, of class $\mathcal{C}^{1,\alpha}$,
$0<\alpha<1$, and we can represent it as $D=z+\delta B$, where $B$ is the reference domain of size
$1$ containing the origin, $\delta \ll 1$ is the characteristic size of $D$, and $z$ is its
location. The characteristic function of $D$ is denoted by $\chid$, and its constant
conductivity and permittivity are denoted by $\sigma$ and $\e$ respectively with $\sgmd>0,
\vepd>0$. The conductivity and permittivity distributions of the whole space are piecewise constant:
\begin{align}
  \label{eq:cond_pmtt_def}
  \sigma(x) &= \sgmo + (\sgmd-\sgmo)\chid(x), \
  \text{ and } \vep(x) = \vepo + (\vepd-\vepo)\chid(x)
\end{align}
where $\sgmo>0, \vepo\geq 0$ and $\sgmo\neq \sgmd, \vepo\neq\vepd$ are the background values and $\chid$ is the characteristic function of $D$.

\subsection{Governing equation for the voltage potential}
\label{sec:potential-field}

Under the electro-quasi-static (or EQS) approximation of the Maxwell's system, the
electric field reads $E(t,x)=\nabla u(t,x)$, where $u$ is  the voltage potential, and the magnetic field $H$ satisfies
\begin{align}
  \label{eq:Amperes_law}
  \nabla\times H(t,x) = \vep(x)\p_t E(t,x) + J(t,x),
\end{align}
where the current density $J(t,x)=\sigma(x)E(t,x)+J_s(t,x)$, \ie the sum of induction and source
current. Let $f(t,x):=-\nabla. J_s(t,x)$ be the source. Taking the divergence of
\eqref{eq:Amperes_law}, we get
\begin{align*}
  \nabla . (\sigma(x)E(t,x) + \vep(x)\p_t E(t,x)) = -\nabla. J_s(t,x) = f(t,x),
\end{align*}
or in terms of  $u$,
\begin{align}
  \label{eq:Amperes_law_u}
  \nabla.(\sigma(x) + \vep(x)\p_t)\nabla u(t,x) = f(t,x).
\end{align}

We complete \eqref{eq:Amperes_law_u} by a decay condition at
infinity %$u(t,x)=O(\abs{x}^{1-d})$
as well as an initial condition at $t=0$, and obtain the governing equation of the voltage potential
\begin{equation}
  \label{eq:governing_eq}
  \left\{
    \begin{alignedat}{2}
      \nabla.(\sigma(x) + \vep(x)\p_t)\nabla u(t,x) &= f(t,x) \ \text{ in } \R_+\times \DuDc
      \ ,
      \\
      \abs{u(t,x)} &= O(\abs{x}^{1-d}) \ \text{ as } \abs{x}\rightarrow +\infty,
      t\in \R_+ \ ,\\
      u(0,x) &= u_0(x) \ \text{ in } \DuDc \ .
    \end{alignedat}
  \right.
\end{equation}

For the eletro-sensing problem in water we typically set for the surrounding water $\sgmo = 1$
and $\vepo = 0$. Furthermore, we suppose there is no potential at the initial state and let $\Dc = \DuDc \setminus \overline D$. Under
these settings, it is easy to see that \eqref{eq:governing_eq} can be rewritten as the
following transmission problem:
\begin{equation}
  \label{eq:governing_eq_trans}
  \def\vepd{\vep}
  \def\sgmd{\sigma}
  \left\{
    \begin{alignedat}{2}
      \vepd \Delta u'(t) + \sgmd \Delta u(t) &= 0 \ \text{ in } \R_+ \times D \ , \\
      \Delta u(t) &= f(t) \ \text{ in } \R_+\times \Dc \ , \\
      {u(t)} \Big\vert_- &= {u(t)} \Big\vert_+ \ \text{ on } \R_+ \times \p D \ ,\\
      \vepd \ddn{u'(t)} \Big\vert_- + \sgmd \ddn{u(t)} \Big\vert_- &= \ddn{u(t)} \Big\vert_+ \
      \text{ on } \R_+ \times \p D \ ,\\
      \abs{u(t,x)} &= O(\abs{x}^{1-d}) \ \text{ as } \abs{x}\rightarrow +\infty, t\in \R_+ \
      ,\\
      u(0, x) &= 0 \ \text{ in } \DuDc \ .
    \end{alignedat}  
  \right.
\end{equation}

\subsubsection{Uniqueness of a solution to the governing equation}
\label{sec:uniqueness-solution}
We define the Banach spaces %\WARN{The dual of $\Hloc$ is $H^{-1}$?}
\begin{align}
  \label{eq:def_Xsp_Xspp}
  \Xsp=\Hloc(\DuDc), \ \Xsp'=H^{-1}(\DuDc),
\end{align}
and consider \eqref{eq:governing_eq} in $\HoRpX$ with the initial condition $u_0\in \Xsp$, and
the source term $f\in\HoRpXp$.

\begin{lem}\label{lem:uniqueness_of_gov_eq}
  If the solution to \eqref{eq:governing_eq} fulfills $u\in\HoRpX$, then it is unique.
\end{lem}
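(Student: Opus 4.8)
The plan is to run a standard energy argument on the difference of two solutions, exploiting that the conductivity is uniformly positive while the permittivity is merely non-negative. Suppose $u_1,u_2\in\HoRpX$ both solve \eqref{eq:governing_eq} with the same source $f$ and the same initial datum $u_0$, and set $w:=u_1-u_2$. By linearity $w\in\HoRpX$ satisfies the homogeneous equation $\nabla.(\sigma(x)+\vep(x)\p_t)\nabla w=0$ in $\R_+\times\DuDc$, together with the decay $\abs{w(t,x)}=O(\abs{x}^{1-d})$ as $\abs{x}\to\infty$ and the zero initial condition $w(0,\cdot)=0$. It then suffices to show $w\equiv 0$.

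First I would pass to the weak formulation and test the equation with $w(t,\cdot)$ itself. Multiplying by $w$ and integrating by parts over $\DuDc$, the contribution at infinity vanishes because the decay of $w$ (and of its gradient) makes the flux through a sphere of radius $R$ behave like $R^{d-1}\abs{w}\abs{\nabla w}=O(R^{-d})\to 0$, while the transmission conditions across $\p D$ are built into the distributional equation and leave no residual jump terms. This yields, for a.e.\ $t>0$, the identity
\begin{align*}
  \frac{1}{2}\frac{d}{dt}\int_{\DuDc}\vep(x)\,\abs{\nabla w(t,x)}^2\,dx + \int_{\DuDc}\sigma(x)\,\abs{\nabla w(t,x)}^2\,dx = 0 .
\end{align*}
Integrating in time from $0$ to $t$ and using $w(0,\cdot)=0$, I obtain
\begin{align*}
  \frac{1}{2}\int_{\DuDc}\vep(x)\,\abs{\nabla w(t,x)}^2\,dx + \int_0^t\!\!\int_{\DuDc}\sigma(x)\,\abs{\nabla w(s,x)}^2\,dx\,ds = 0 .
\end{align*}
Both terms are non-negative since $\vep(x)\geq 0$ and $\sigma(x)\geq\min(\sgmo,\sgmd)>0$ everywhere, so each must vanish. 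In particular the dissipation term forces $\nabla w(s,\cdot)=0$ for a.e.\ $s$, hence $w(s,\cdot)$ is constant in space; the decay condition then pins this constant to zero, giving $w\equiv 0$ and therefore $u_1=u_2$.

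The step I expect to be delicate is the rigorous justification of the energy identity, not its formal derivation. Because $\vep$ degenerates to $0$ in the exterior (recall $\vepo=0$ in water), the reservoir term $\int\vep\abs{\nabla w}^2$ only controls $\nabla w$ inside $D$, so the whole argument hinges on the dissipation term, which is sound since $\sigma$ is uniformly elliptic. The genuine technical points are: (i) verifying that $w(t,\cdot)$ is an admissible test function, which relies on the regularity $w\in\HoRpX$; (ii) justifying the identity $\int_{\DuDc}\vep\,\p_t\nabla w.\nabla w=\tfrac{1}{2}\tfrac{d}{dt}\int_{\DuDc}\vep\abs{\nabla w}^2$, which I would establish by the standard Lions-type lemma on $\tfrac{d}{dt}\langle\cdot,\cdot\rangle$ for functions that, together with their time derivative, lie in the same (weighted) energy space, approximating by smooth functions if necessary; and (iii) controlling the boundary terms on the unbounded domain, for which I would either invoke the prescribed decay directly or use a cutoff at radius $R$ and let $R\to\infty$.
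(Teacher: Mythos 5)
Your proof is correct and follows essentially the same route as the paper's: form the difference $w=u_1-u_2$, test the homogeneous equation against $w$ itself, obtain the energy identity $\tfrac12\tfrac{d}{dt}\int\vep|\nabla w|^2+\int\sigma|\nabla w|^2=0$, integrate in time, and use the uniform positivity of $\sigma$ together with the decay condition to conclude $w\equiv 0$. The only cosmetic difference is that the paper handles the weak formulation by testing with $\vphi\in\CDD$ and then invoking density, whereas you justify the boundary terms at infinity directly (or by a cutoff); both amount to the same argument.
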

\begin{proof}
  We introduce two bilinear forms
  \begin{align}
    \label{eq:bilinear_a1_a2}
    a_1(u,v) = \int_{\R^d} \vep(x)\nabla u(x) . \nabla v(x) dx, \   a_2(u,v) = \int_{\R^d}
    \sigma(x)\nabla u(x) . \nabla v(x) dx.
  \end{align}

  Let $u_1, u_2$ be two solutions to \eqref{eq:governing_eq} in  $\HoRpX$. Then, their
  difference $w=u_1-u_2\in \HoRpX$ must solve 
  \begin{equation}
    \label{eq:governing_eq_w}
    \left\{
      \begin{alignedat}{2}
        \nabla.(\sigma(x) + \vep(x)\p_t)\nabla w(t,x) &= 0, \ \text{ in } \R_+ \times \DuDc \
        ,
        \\
        \abs{w(t,x)} &= O(\abs{x}^{1-d}), \ \text{ as } \abs{x}\rightarrow +\infty,
        t\in \R_+ \ ,\\
        w(0,x) &= 0, \ \text{ in } \DuDc \ .
      \end{alignedat}
    \right.
  \end{equation}
  Multiplying the first line by a test function $\vphi\in \CDD$ and integrating by parts in $\R^d$
  yield:
  \begin{align*}
    a_1(w(t), \vphi) + a_2(w'(t), \vphi) = 0, \ \text{ for a.e. }\  t\in \R_+ .
  \end{align*}
  which implies, by the density of $\CDD$ in $H^1(\DuDc)$,
  \begin{align*}
    a_1(w'(t), w(t)) + a_2(w(t), w(t)) = 0, \ \text{ for } t\in \R_+ \ a.e.
  \end{align*}
  
  For any $T>0$, integrating the expression above on $[0,T]$ and using the initial condition
  $\nabla w(0,x)=0$ gives
  % \begin{align*}
  %   \int_{\R^d} \vep(x) \int_0^T \frac 1 2 \abs{\nabla w'(t)}^2 \,dt\,dx + \int_{\R^d}
  %   \sigma(x)
  %   \int_0^T \abs{\nabla w(t)}^2 \,dt\,dx = 0
  % \end{align*}
  % this becomes
  \begin{align*}
    \hlf\int_{\R^d} \vep(x) \abs{\nabla w'(T)}^2 \,dx + \int_{\R^d} \sigma(x) \int_0^T
    \abs{\nabla w(t)}^2 \,dt\,dx = 0,
  \end{align*}
  which means, since $\sigma(x)>0$ and $\vep(x)\geq 0$, that $\abs{\nabla w(t,x)}^2=0$ in
  $[0,T]\times \R^d$. Since $T>0$ is arbitrary, combining this with the decay condition in
  \eqref{eq:governing_eq_w} implies $u_1(t,x)=u_2(t,x)$ a.e. in $\R_+\times\R^d$.
  % for a.e. $x\in\R^d, t\in \R_+$.

  % Multiplying the first line by a test function $\vphi\in \CIDD$ and integrate by part in
  % $\R^d$
  % yields:
  % \begin{align*}
  %   a_1(w(t), \vphi(t)) + a_2(w'(t), \vphi(t)) = 0, \ \text{ for } t\in I \ a.e.
  %   %   \int_{\R^d} \vep(x) \nabla w'(t,x) .\nabla \vphi(t,x) dx + \int_{\R^d} \sigma(x) \nabla
  %   %   w(t,x)
  %   %   .\nabla \vphi(t,x) dx = 0
  % \end{align*}
  % which implies, by the density of $\CIDD$ in $\HoIX$,
  % \begin{align*}
  %   a_1(w(t), w(t)) + a_2(w'(t), w(t)) = 0, \ \text{ for } t\in I \ a.e.
  % \end{align*}
  % % \begin{align*}
  % %   \int_{\R^d} \vep(x) \nabla w'(t,x) .\nabla w(t,x) dx + \int_{\R^d} \sigma(x) \abs{\nabla
  % %   w(t,x)}^2 dx = 0
  % % \end{align*}
  % Integrate on $I$ gives
  % \begin{align*}
  %   \int_{\R^d} \vep(x) \int_I \frac 1 2 \abs{\nabla w'(t)}^2 \,dt\,dx + \int_{\R^d} \sigma(x)
  %   \int_I \abs{\nabla w(t)}^2 \,dt\,dx = 0
  % \end{align*}
  % whics means $\nabla w(t,x)=0$ a.e., since $\sigma(x)>0$. Combined with the decay condition
  % in
  % \eqref{eq:governing_eq_w}, this implies that $u_1(t,x)=u_2(t,x)$ for a.e. $x\in\R^d, t\in
  % I$.
\end{proof}

\subsection{Electric organ and pulse-type signals}
\label{sec:electr-organ-wavef}
The time-varying source current $f$ emitted by the fish can be modeled as
\begin{align}
  \label{eq:source_f}
  f(t,x)= h(t) \tf(x),
\end{align}
with $h$ being the shape form (\ie the time profile) of the source. $\tf$ is a function modeling
the electric organ:
\begin{align}
  \label{eq:source_fx}
  \tf(x) = \sum_{j=1}^p a_j \delta_0(x-\xsj) \  \text{ with } \xsj\in \Dc,
\end{align}
where $\xsj\in\R^d, j=1\ldots p$ are the point sources and characterize the spatial distribution of the electric
organ, and $a_j$ fulfills the neutrality condition:
\begin{align}
  \label{eq:neutrality}
  \sum_{j=1}^p a_j = 0,
\end{align}
which insures the decay behavior $ \abs{u(t,x)} = O(\abs{x}^{1-d})$ at infinity. 

We refer the reader to \cite{ammari_modeling_2013} for more details on the modeling of the
electric organ and \cite{ammari_shape_2014} for the electrolocation using wave-type signals.
Throughout this paper, we will consider the pulse shape form $h$ under
the assumption
\begin{align}
  \label{eq:h_assump1}
  h \text{ is } causal \text{ and } h\in\Sz(\R),
\end{align}
where $\Sz(\R)$ is the classical Schwartz space. As a simple consequence it holds
$h^{(k)}(0)=0$ for any $k\geq 0$. 

It is worth emphasizing that causality is important issue because of physical
considerations. Throughout this paper, we will carefully check that the solution to the
electro-sensing problem is causal.

% \eqref{eq:h_assump1} is a loose assumption on $h$, and in section \ref{xxx} we will add more
% specific assumptions.

% which is a smooth bandpass function well
% localized in time\cite{xxx}, furthermore $h$ is causal belongs to the classical Schwartz space
% $\Sz(\R)$.

% Hence we suppose $h$ is continuous, bounded and supported on $(0,T)$ for some
% $T>0$ small.

% For more realistic models one can take for example $f(t,x) = h(t)\tilde f(x)$ with $\tilde f$
% being represented by a sum of point sources satisfying the neutrality condition. 

% with the extra condition
% \begin{align}
%   \label{eq:pulse_type_h_cond}
%   \int_\R (1+\abs\omega^2)\abs{\hat h(\omega)}^2\,d\omega < +\infty.
% \end{align}

% \begin{align}
%   \label{eq:pulse_type_h_cond}
%   \int_0^\infty h(t)dt = \int_0^\infty t h(t)dt = 0.
% \end{align}
% which is $\hat h(0)=\hat h'(0)=0$ in the frequency domain. 

\section{Representation of solution}
\label{sec:repr-solut}

We introduce in this section an integral representation of the solution of the problem
\eqref{eq:governing_eq_trans}. The following notation will be used in this section. Let
\begin{align}
  \label{eq:some_constants}
  \kappao := \sigma+i\vep\omega, \ \lambdao := \frac{\kappao + 1}{2(\kappao - 1)} , \ \lambda
  := \frac{\sigma+1}{2(\sigma-1)}, \ \alpha := \frac{\vep}{\sigma-1}.
\end{align} 
We call $\kappao$ the admittivity. 

% The following is basic and the proof is skipped here.

% \begin{lem}\label{lem:time_greens_func}
%   %   Let $\mapop h \R \R$ be a bounded and measurable function with compact support on
%   %   $\R_+$. 
%   Let $\gamma>0$ be a constant and $h$ satisfies \eqref{eq:h_assump1}.  Then the ODE
%   \begin{equation}
%     \label{eq:ODE_h}
%     \left\{
%       \begin{alignedat}{3}
%         v'(t) + \gamma v(t) &= h(t), \ \text{ in } \R_+\\
%         v(0) &= 0
%       \end{alignedat}
%     \right.
%   \end{equation}
%   has the unique solution given by
%   \begin{align}
%     \label{eq:ODE_h_solution}
%     v(t) = e^{-\gamma t} \int_{0}^t e^{\gamma s} h(s)\, ds = g*h(t)
%   \end{align}
%   with the convolution kernel
%   \begin{align}
%     \label{eq:time_conv_kernel}
%     g(t) = \mathbbm{1}_{t\geq 0}(t) e^{-\gamma t}.
%   \end{align}
% \end{lem}

\subsection{Layer potentials}
\label{sec:layer-potentials}
Let the single layer potential of a density $\phi \in \LtpD$ be defined by
\begin{align} 
  \label{defs}
  \Sglf D \phi (x) &:= \int_{\p D} \Gamma(x-y) \phi (y) \, d \sigma(y), \quad x \in
  \mathbb{R}^d.
\end{align}
It is well-known that $\Sglf D \phi$ is harmonic on $\R^d\setminus \p D$. Let
Neumann-Poincar\'e operator $\Kstar D$ on $\LtpD$ be given by
\begin{equation} 
  \label{defk}
  \Kstarf D \phi (x) := \int_{\p D} \frac
  {\partial \Gamma}{\partial {\nu(x)}}(x-y) \phi (y)\,ds(y), \quad
  \phi \in \LtpD.
\end{equation}
Then we have the jump formula for the single layer potential:
\begin{equation}
  \label{eq:jump_formulas}
  \ddn {\Sglf D \phi} \Big\vert_\pm  = \Paren{\pm\frac 1 2 I + \Kstar D}[\phi].
\end{equation}
We also introduce the $L^2$-adjoint of $\mathcal{K}^\star_D$, $\mathcal{K}_D$, which is given by
$$
\mathcal{K}_D[\phi] (x) := \int_{\p D} \frac
{\partial \Gamma}{\partial {\nu(y)}}(x-y) \phi (y)\,ds(y), \quad
\phi \in \LtpD.
$$

\subsection{Preliminary results}
\label{sec:preliminary-results}

We recall first that the operator $\Kstar D$ is compact, provided that $D$ is of class $\mathcal{C}^{1,\alpha}$ for some $0<\alpha<1$, with eigenvalues included in
$(-\hlf, \hlf]$ and it can be decomposed as \cite{ammari_mathematical_2013}
\begin{align}
  \label{eq:SVD_Kstar}
  \Kstarf D \phi = \sum_{j=1}^\infty \mu_j \seqS{\phi, u_j} \,u_j,
  % \Kstarf D \phi = \sum_{j=1}^\infty \mu_j \underbrace{\intbd D {\phi(y) \Sglf D {u_j}(y)} y
  % }_{\seqS{\phi, u_j}} \,u_j,
\end{align}
where $\mu_j$ and $u_j\in\LtpD$ are the $j$-th eigenvalue and eigenvector of $\Kstar D$
respectively, and the scalar product
\begin{align}
  \label{eq:innerprod_SD}
  \seqS{\phi, u_j}:=\intbd D {\phi(y) \Sglf D {u_j}(y)} y .
\end{align}
Furthermore, we have the the energy identity 
\begin{align}
  \label{eq:energyident_SD}
  \norm{\hat\phi}_\LtpD^2 = \sum_j\abs{\seqS{\hat\phi,u_j}}^2.  
\end{align}
The spectral decomposition (\ref{eq:SVD_Kstar}) is based on a Calder\'on's identity and a symmetrization principle; see for instance \cite[Chap. 2]{ammari_mathematical_2013}.  

\begin{lem}\label{lem:loKstar_on_Schwarz}
  Let $\vphi\in\SzRLD$ and let $\hphi$ be its Fourier Transform. The mapping
  \begin{align}
    \label{eq:def_op_loKstar}
    \hphi(\omega) \mapsto \loKstarf D {\hphi(\omega)}, \ \forall \omega\in\R
  \end{align}
  % $\loKstar D$ defined as
  % \begin{align}
  %   \label{eq:def_op_loKstar}
  %   \hphi(\omega) \mapsto \loKstarf D {\hphi(\omega)}
  % \end{align}
  defines a homeomorphism on $\SzRLD$, and in particular,
  \begin{align}
    \label{eq:continuity_loKstar}
    \hphi\xrightarrow{\Sz} 0 \ \text{ implies }\ \loKstarfi D \hphi \xrightarrow{\Sz} 0.
  \end{align}
  The same results hold also for the operator $\loKnst D$.
\end{lem}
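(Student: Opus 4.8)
The plan is to read the map in \eqref{eq:def_op_loKstar} fibrewise in $\omega$: write $T_\omega:=\lambdao I-\Kstar D$, so that the operator under study is $\hphi\mapsto(\omega\mapsto T_\omega\hphi(\omega))$. Since $\SzRLD$ is a Fréchet space, to obtain a homeomorphism I would establish (i) $\hphi\mapsto T_\omega\hphi$ maps $\SzRLD$ continuously into itself; (ii) each $T_\omega$ is boundedly invertible on $\LtpD$, so the map is a bijection; and (iii) $\hphi\mapsto T_\omega^{-1}\hphi$ also maps $\SzRLD$ continuously into itself. The continuity assertion \eqref{eq:continuity_loKstar} is then just a special case of (i). By the open mapping theorem for Fréchet spaces, (i) and (ii) would already force the inverse to be continuous, but the quantitative inverse bounds are needed anyway.

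For (i) it is convenient to write $\lambdao=\hlf+(\sigma-1+i\vep\omega)^{-1}$, which exhibits $\lambdao$ as a smooth bounded multiplier whose derivatives decay, $\abs{\lambdao^{(k)}}\lesssim(1+\abs{\omega})^{-(k+1)}$ for $k\geq1$. As $\Kstar D$ is a fixed bounded operator on $\LtpD$ independent of $\omega$, it commutes with $\p_\omega$ and preserves every seminorm; applying the Leibniz rule to $\p_\omega^b(\lambdao\hphi)$ and using boundedness of the $\lambdao^{(k)}$ then shows each Schwartz seminorm of $T_\omega\hphi$ is bounded by finitely many Schwartz seminorms of $\hphi$. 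Thus (i) holds routinely.

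The heart of the matter — and the step I expect to be the main obstacle — is the resolvent estimate behind (ii) and (iii). First I would check that $\lambdao$ never meets the spectrum of $\Kstar D$: since $\operatorname{Im}\lambdao=-\vep\omega\big((\sigma-1)^2+\vep^2\omega^2\big)^{-1}$ is nonzero for $\omega\neq0$, while at $\omega=0$ the value $\lambda$ is real with $\abs{\lambda}>\hlf$, and the eigenvalues $\mu_j$ of $\Kstar D$ are real and lie in $(-\hlf,\hlf]$, the number $\lambdao$ avoids the spectrum for every $\omega$. Diagonalizing through the spectral decomposition \eqref{eq:SVD_Kstar} and the Parseval identity \eqref{eq:energyident_SD} (which rest on the self-adjointness of $\Kstar D$ in the inner product $\seqS{\cdot,\cdot}$), I get, for arbitrary $\phi\in\LtpD$,
\begin{align*}
  \norm{T_\omega\phi}_\LtpD^2=\sum_j\abs{\lambdao-\mu_j}^2\,\abs{\seqS{\phi,u_j}}^2
  \ \geq\ \operatorname{dist}\big(\lambdao,[-\tfrac12,\tfrac12]\big)^2\,\norm{\phi}_\LtpD^2 .
\end{align*}
Since $T_\omega$ is a compact perturbation of $\lambdao I$, hence Fredholm of index zero, this lower bound already makes it invertible. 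The delicate point is that $\lambdao\to\hlf$ as $\abs{\omega}\to\infty$ and $\hlf$ is itself a spectral value, so the distance decays; a short computation shows it stays bounded away from $0$ on bounded $\omega$-sets and is $\gtrsim\abs{\omega}^{-1}$ at infinity, yielding the polynomial resolvent bound $\norm{T_\omega^{-1}}_{\LtpD\to\LtpD}\lesssim1+\abs{\omega}$. This merely polynomial growth is exactly what keeps $T_\omega^{-1}$ compatible with Schwartz decay.

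For (iii) I would differentiate $R(\omega):=T_\omega^{-1}$ via $R'=-\lambdao'R^2$ and induct: by Leibniz $\p_\omega^b(R\hphi)$ is a finite sum of terms $\big(\prod_i\lambdao^{(k_i)}\big)R^{m}\hphi^{(j)}$ with $m\leq b-j+1$, with $m-1$ factors $\lambdao^{(k_i)}$, and $\sum_ik_i=b-j$. Bounding $\norm{R^m}\lesssim(1+\abs{\omega})^{m}$ against $\big|\prod_i\lambdao^{(k_i)}\big|\lesssim(1+\abs{\omega})^{-(b-j)-(m-1)}$, every such term is $\lesssim(1+\abs{\omega})\,\norm{\hphi^{(j)}}_\LtpD$, so that $\norm{\p_\omega^b(R\hphi)(\omega)}_\LtpD\lesssim(1+\abs{\omega})\max_{j\leq b}\norm{\hphi^{(j)}(\omega)}_\LtpD$; multiplying by $\abs{\omega}^a$ and taking the supremum controls each seminorm of $R\hphi$ by finitely many seminorms of $\hphi$. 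This yields (iii) and hence the homeomorphism. Finally, the claim for $\lambdao I-\mathcal{K}_D$ follows verbatim: $\mathcal{K}_D=(\Kstar D)^\ast$ has the same real spectrum, and $\norm{(\lambdao I-\mathcal{K}_D)^{-1}}=\norm{(\overline{\lambdao}I-\Kstar D)^{-1}}$ with $\abs{\operatorname{Im}\overline{\lambdao}}=\abs{\operatorname{Im}\lambdao}$ gives the identical resolvent bound.
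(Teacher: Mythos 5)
Your proof is correct and follows essentially the same route as the paper: continuity of the forward map via the Leibniz rule and boundedness of $\Kstar D$, bijectivity from the pointwise invertibility of $\lambda(\omega)I-\Kstar D$, and continuity of the inverse from the resolvent derivative formula $R'=-\lambda'(\omega)R^2$ combined with the $O(\abs{\omega})$ growth of $\norm{(\lambda(\omega)I-\Kstar D)^{-1}}$. The only difference is cosmetic: you justify the pointwise invertibility explicitly (spectral lower bound via $\operatorname{dist}(\lambda(\omega),[-\tfrac12,\tfrac12])$ plus a Fredholm argument), where the paper simply asserts it and uses the cruder bound $(\abs{\lambda(\omega)}-\tfrac12)^{-1}$.
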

\begin{proof}
  We shall prove the lemma only for $\loKstar D$. The case of the operator $\loKnst D$ is
  similar.
  
  % We verify first that $\loKstarf D \hphi\in\SzRLD$ for any $\hphi\in\SzRLD$. 
  
  % and one verifies esasily the following formula of for any 

  % \begin{align}
  %   \label{eq:Kstar_diff_formula}
  %   (\Kstarf D \hphi)^{(\beta)}(\omega) = (\Kstarf D \hphi^{(\beta)})(\omega),
  % \end{align}
  % hence $\loKstarf D \hphi:\R\to\LtpD$ is smooth in $\omega$. Moreover, 

  For $b\in\NN$, let $\lambda^{(b)}(\omega)$ be the derivative of order $b$ of
  $\lambda(\omega)$ in $\omega$, then any $\hphi\in\SzRLD$ multiplied by
  $\lambda^{(b)}(\omega)$ remains a function of $\SzRLD$. Moreover, by applying the product
  rule and the boundedness of $\Kstar D$, it is easy to verify
  \begin{align*}
    \pab\Paren{\loKstarf D \hphi} \lesssim \sum_{0\leq b'\leq b}p_{a,b'}(\hphi)<\infty, \ \forall
    a,b\in\NN
    % \sup_{\omega\in\R}\, \abs{\omega}^\alpha\norm{(\loKstarf D \hphi)^{(\beta)}}_\LtpD
    % \lesssim
    % \sup_{\omega\in\R}\, \abs{\omega}^\alpha\norm{\hphi^{(\beta)}}_\LtpD <\infty,
  \end{align*}
  and hence,  $\loKstarf D \hphi\in\SzRLD$ for any $\hphi\in\SzRLD$. 
  
  For a fixed $\omega$, the operator $\loKstar D$ is invertible on $\LtpD$. Hence
  \begin{align*}
    \loKstarf D \hphi = 0, \ \forall \omega\in\R %\text{ and for a.e. } x\in\p D
  \end{align*}
  implies $\hphi(\omega)=0, \forall \omega$, thus $\hphi=0$ in
  $\SzRLD$. Therefore $\loKstar D$ is injective.

  To prove that $\loKstar D$ is surjective, it suffices to show that $\loKstari D$ maps $\SzRLD$ to
  $\SzRLD$. The following statement can be verified easily. For $k\in\N$, we have
  \begin{align*}
    % \label{eq:loKstari_diff_formula}
    \Paren{\loKstarfik D k \hphi}' = \loKstarfik D k {\hphi'} - k\lambda'(\omega)\loKstarfik D
    {(k+1)} \hphi , \ \forall \omega\in\R,
    % \Paren{\loKstarfi D \hphi}' = \loKstarfi D {\hphi'} -
    % \lambda'(\omega)\Paren{\lambda(\omega)
    % I -
    % \Kstar D}^{-2}[\hphi], \ \forall \omega\in\R,
  \end{align*}
  and more generally,
  \begin{align*}
    % \label{eq:loKstari_diff_formula_beta}
    \Paren{\loKstarfi D \hphi}^{(b)} = 
    \sum_{0\leq b'\leq b+1} \Paren{\lambda(\omega) I - \Kstar D}^{-{b'}} \Brack{P_{b'}(\hphi(\omega);
      \lambda(\omega))}
    % \loKstarfi D {P(\hphi(\omega); \lambda(\omega))}, \ \forall \omega\in\R,
  \end{align*}
  where $P_{b'}$ is a differential operator of order $b+1$ in $\omega$ with coefficients
  depending on $\lambda(\omega)$ and its derivatives (up to order $b+1$). Furthermore, 
  % \WARN{Is the following also true for $\loKnsti D$?}
  \begin{align*}
    \norm{\loKstari D}_\LtpD \leq \frac 1 {\abs{\lambda(\omega)} - 1/2}
  \end{align*}
  which behaves as $O(\abs{\omega})$ only when $\omega\to\infty$, therefore it holds
  \begin{align*}
    % \label{eq:loKstarf_beta_Sz}
    \pab\Paren{\loKstarfi D \hphi} \lesssim \sum_{0\leq a',b'\leq
      a+b+1}p_{a',b'}(\hphi)<\infty, \ \forall a,b\in\NN
    % \sup_{\omega\in\R} \abs{\omega}^\alpha\Norm{\Paren{\loKstarfi D \hphi}^{(\beta)}}_\LtpD
    % \lesssim \sup_{\omega\in\R}\, \abs{\omega}^\alpha\norm{\hphi^{(\beta)}}_\LtpD <\infty .
  \end{align*}
  Hence $\loKstar D$ is surjective.

  Finally, the claim \eqref{eq:continuity_loKstar} follows from the inequality above
  and this completes the proof.
\end{proof}

The following result shows that the operator $\loKstari D$ can preserve causality, at
least for some special class of functions such as separable functions: $\psi(t,x) =
h(t)\tpsi(x)$ for some function $h$ of the classical Schwartz space $\Sz(\R)$ and $\tpsi$ of
$\LtpDo$.  Here, $\LtpDo$ is the set of functions in $L^2(\partial D)$ with zero mean-value.
% We introduce a subspace of $\SzRLD$ which consists of function $\phi$ such that
% \begin{align}
%   \label{eq:subspace_SzRLD}
%   \sum_{j} \sup_{\omega\in\R} \abs{\omega}^{2a} \Abs{\seqS{\hat\phi^{(b)}, u_j}}^2 < \infty, \
%   \forall a,b\in\Z_+,
% \end{align}
% where $\hat\phi^{(b)}$ denotes the $b$-th order derivative in the frequency variable
% $\omega$. In fact using \eqref{eq:energyident_SD}, it is easy to see that
% \eqref{eq:subspace_SzRLD} actually implies
% \begin{align*}
%   \sup_{\omega\in\R} \,\abs{\omega}^a \Norm{\hat\phi^{(b)}(\omega)}_{\LtpD} < \infty
% \end{align*}
% hence by definition $\phi$ is a function of $\SzRLD$.

% A case of special interest for Lemma \ref{lem:loKstar_causality} is when $\psi$ is separable:
% Then \eqref{eq:subspace_SzRLD} is the same as \eqref{eq:pab_norms} and we have
% \begin{cor}\label{cor:separable_causality}
%   For a separable and causal function $\psi\in\SzRLDo$, the function $\vphi$ defined via
%   \eqref{eq:loKstar_causality} is causal.
% \end{cor}

\begin{thm}
  \label{thm:separable_causality}
  % \WARN{Can we prove the same result for $\SzRLD$?}  
  For a separable and causal function $\psi\in\SzRLDo$, define a function $\vphi$ in the
  frequency domain as
  \begin{align}
    \label{eq:loKstar_causality}
    \hphi = \loKstarfi D \hpsi.
  \end{align}
  Then $\vphi\in\SzRLDo$ and $\vphi$ is causal.
\end{thm}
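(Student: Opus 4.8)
The plan is to separate the statement into three parts of increasing difficulty: membership of $\vphi$ in $\SzRLD$, the refinement that $\vphi$ takes values in $\LtpDo$, and---the essential point---causality. The first is immediate from Lemma~\ref{lem:loKstar_on_Schwarz}: since $\hpsi\mapsto\Paren{\lambda(\omega)I-\Kstar D}^{-1}\hpsi$ is a homeomorphism on $\SzRLD$ and $\psi\in\SzRLDo\subset\SzRLD$, we get $\vphi\in\SzRLD$. For the zero mean-value refinement I would exploit the separable structure $\hpsi(\omega)=\hat h(\omega)\,\tpsi$ and show that the resolvent preserves $\LtpDo$: the operator $\Kstar D$ leaves $\LtpDo$ invariant (because $\mathcal{K}_D[1]$ is constant, so $\int_{\p D}\Kstar D[\phi]\,ds$ is a multiple of $\int_{\p D}\phi\,ds$), and being compact on $\LtpDo$ with spectrum in $(-\tfrac12,\tfrac12)$ it makes $\lambda(\omega)I-\Kstar D$ injective and Fredholm of index zero, hence boundedly invertible, on $\LtpDo$. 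Thus $\hphi(\omega)=\hat h(\omega)\,\Paren{\lambda(\omega)I-\Kstar D}^{-1}\tpsi\in\LtpDo$ for every $\omega$, and $\vphi\in\SzRLDo$.

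For causality I would argue by analytic continuation into the lower half-plane followed by a contour shift, i.e.\ a Paley--Wiener argument tuned to the convention $\hat\phi(\omega)=\int_\R\phi(t)e^{-it\omega}\,dt$, under which causality of an $\LtpDo$-valued tempered function is equivalent to its Fourier transform continuing holomorphically, with rapid decay, into $\mathbb{C}_-:=\Set{\omega:\Im\omega<0}$. The separable structure factors the transform as $\hphi(\omega)=\hat h(\omega)\,g(\omega)$, with $g(\omega):=\Paren{\lambda(\omega)I-\Kstar D}^{-1}\tpsi$, isolating the two ingredients. Since $h$ is causal and Schwartz, repeated integration by parts (using $h^{(k)}(0)=0$) shows $\hat h$ is holomorphic on $\mathbb{C}_-$, continuous up to $\R$, and obeys $\abs{\hat h(\omega)}\leq\norm{h^{(k)}}_{L^1}\,\abs{\omega}^{-k}$ for every $k\in\NN$ uniformly on $\overline{\mathbb{C}_-}$. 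Granting a holomorphic, at-most-polynomially-growing continuation of $g$, the product $\hphi$ decays faster than any power on $\mathbb{C}_-$; the vertical sides of a large rectangle then contribute nothing, and pushing the horizontal contour down to $\Im\omega\to-\infty$---where $\abs{e^{it\omega}}=e^{-t\Im\omega}\to0$ for $t<0$---forces $\vphi(t)=0$ for all $t<0$.

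The hard part, which I expect to be the main obstacle, is precisely this holomorphic continuation of $g$: it reduces to showing that $\lambda(\omega)$ remains in the resolvent set of $\Kstar D$ for every $\omega\in\overline{\mathbb{C}_-}$, and here the positivity of the admittivity is decisive. Writing $\omega=\omega_1+i\omega_2$ with $\omega_2\leq0$ gives $\kappao=\sigma+i\e\omega=(\sigma+\e\abs{\omega_2})+i\e\omega_1$, so $\Re\kappao=\sigma+\e\abs{\omega_2}>0$. From $\abs{\kappa+1}^2-\abs{\kappa-1}^2=4\Re\kappa$ this is exactly equivalent to $\abs{\lambda(\omega)}=\tfrac12\,\abs{\kappao+1}/\abs{\kappao-1}>\tfrac12$; since every eigenvalue $\mu_j$ of $\Kstar D$ satisfies $\abs{\mu_j}\leq\tfrac12$, it follows that $\lambda(\omega)\neq\mu_j$ for all $j$, so $\lambda(\omega)$ avoids the spectrum. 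On $\LtpDo$ the spectrum is moreover bounded away from $\pm\tfrac12$, giving a uniform resolvent bound---more than enough for the contour shift. Holomorphy of $g$ then follows from that of $\omega\mapsto\lambda(\omega)$; the only delicate point is the value $\omega_\ast$ with $\kappao=1$, which lies in $\mathbb{C}_-$ when $\sigma<1$ and where $\lambda(\omega)\to\infty$, but there $\Paren{\lambda(\omega)I-\Kstar D}^{-1}\to0$, so $g$ extends holomorphically across $\omega_\ast$ with a removable singularity. This closes the continuation, and with it the causality argument.
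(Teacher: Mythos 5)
Your proposal is correct, but it takes a genuinely different route from the paper's. The paper diagonalizes: using the spectral decomposition of $\Kstar D$ it writes $\hphi(\omega)=\sum_j(\lambda(\omega)-\mu_j)^{-1}\seqS{\hpsi,u_j}\,u_j$, observes that each scalar factor $(\lambda(\omega)-\mu_j)^{-1}=\alpha_j\bigl(1-\beta_j/(\gamma_j+i\omega)\bigr)$ is the transform of an explicitly causal kernel because $\gamma_j>0$, and then spends its real effort proving that the series converges in the Schwartz topology (this is where separability and the energy identity enter), so that the inverse Fourier transform may be taken term by term. You instead run a Paley--Wiener/contour-shift argument on the operator resolvent itself: the inequality $\Re(\sigma+i\e\omega)>0$ for $\Im\omega\le 0$, equivalent to $\abs{\lambda(\omega)}>\tfrac12$, keeps $\lambda(\omega)$ in the resolvent set of $\Kstar D$ on $\LtpDo$, and the rapid decay of $\hat h$ on the closed lower half-plane does the rest. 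The two arguments rest on the same positivity --- the paper's $\gamma_j>0$ is exactly the eigenvalue-by-eigenvalue statement that the pole of $(\lambda(\omega)-\mu_j)^{-1}$ lies in the upper half-plane, i.e.\ that $\lambda(\overline{\mathbb{C}_-})$ misses the spectrum --- but package it differently. The paper's version yields explicit time-domain relaxation kernels $e^{-\gamma_j t}$, useful for interpretation and numerics; yours avoids both the termwise inversion and the tail estimate in $\SzRLD$, treats the removable singularity at $\sigma+i\e\omega=1$ (which the individual scalar factors never see) cleanly, and would extend essentially unchanged to non-separable causal $\psi$, since the integration by parts giving decay of $\hpsi$ in $\overline{\mathbb{C}_-}$ uses only $\psi^{(k)}(0)=0$. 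One point to make explicit: the uniform resolvent bound you invoke is not automatic from distance-to-spectrum for a non-normal operator; justify it either via the Calder\'on symmetrization underlying the paper's spectral decomposition and energy identity, or by continuity of $\omega\mapsto(\lambda(\omega)I-\Kstar D)^{-1}$ on the compactified closed half-plane, using $\lambda(\omega)\to\tfrac12$ at infinity together with the fact that $\tfrac12$ is not an eigenvalue of $\Kstar D$ on $\LtpDo$; in any case only local boundedness plus these two endpoint limits are actually needed for the contour shift.
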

\begin{proof}
  The fact that $\vphi\in\SzRLDo$ follows from Lemma \ref{lem:loKstar_on_Schwarz} and the
  property that $\loKstar D$ is a bijection on $\LtpDo$.  

  For fixed $\omega$, the singular value decomposition gives:
  \begin{align}
    \label{eq:svd_loKstari}
    \hphi(\omega) = \loKstarfi D {\hpsi(\omega)} = \sum_{j} \frac {\seqS{\hpsi,
        u_j}}{\lambda(\omega) - \mu_j} u_j
  \end{align}
  where $\abs{\mu_j}<\hlf$ and $u_j\in\LtpDo$ are the $j$-th eigenvalue and eigenvector of
  $\Kstar D$ respectively and are independent of $\omega$. Notice that
  \begin{align*}
    \frac 1 {\lambda(\omega)-\mu_j} = \alpha_j\Paren{1 - \frac{\beta_j}{\gamma_j + i\omega}},
  \end{align*}
  with the constants $\alpha_j=\frac{2}{1-2\mu_j}, \beta_j={\alpha_j}/{\e}$ and
  $\gamma_j=\sigma/\e + \frac{1+2\mu_j}{\e(1-2\mu_j)}>0$. Let
  \begin{align}
    \label{eq:time_conv_kernel}
    g_j(t) = \mathbbm{1}_{t\geq 0}(t) e^{-\gamma_j t},
  \end{align}
  whose Fourier transform is $\hat g_j(\omega) = 1/(\gamma_j+i\omega)$. Then the function
  $(\lambda(\omega)-\mu_j)^{-1}\hpsi(\omega)$ in the time domain is
  \begin{align*}
    \alpha_j \psi(t) - \alpha_j\beta_j g_j*\psi(t),
  \end{align*}
  % with $g$ being a function of form \eqref{eq:time_conv_kernel} with the constant
  % $\gamma=\gamma_j$ above. This 
  which is clearly a causal function. Hence it suffices to show that the sum
  in \eqref{eq:svd_loKstari} converges in $\SzRLD$. Then by taking inverse Fourier transform
  term by term we obtain the causality of $\vphi$. 
  % in \eqref{eq:svd_loKstari} we will see $\vphi$ is causal. 
  % $\kappa(\omega)+1-2\mu_j(\kappa(\omega)-1)= (1-2\mu_j)\kappa(\omega) + 1+2\mu_j$ 
  % 
  % To prove the convergence of \eqref{eq:svd_loKstari} in $\SzRLD$, 
  For doing so, we write for given
  $a,b\in\NN$
  \begin{align*}
    \Paren{\pab\Paren{\sum_{j=N}^\infty \frac {\seqS{\hpsi, u_j}}{\lambda(\omega) - \mu_j}
        u_j}}^2 &= \sup_{\omega\in\R}\, \abs{\omega}^{2a}\Norm{\sum_{j=N}^\infty \Paren{\frac
        {\seqS{\hpsi,
            u_j}}{\lambda(\omega) - \mu_j}}^{(b)} u_j}_\LtpD^2\\
    &= \sup_{\omega\in\R} \,\abs{\omega}^{2a}\sum_{j=N}^\infty \Abs{\Paren{\frac {\seqS{\hpsi,
            u_j}}{\lambda(\omega) - \mu_j}}^{(b)}}^2,
  \end{align*}
  where the $b$-th order derivative in the first identity is taken termwise since the
  derivative is a continuous linear mapping on $\SzRLD$.
  % by Lemma  \ref{lem:loKstar_on_Schwarz} the derivative of $\hphi$ remains in $\LtpD$. 
  It is easy to see that it will be bounded for any $a,b\in\NN$ if 
  \begin{align*}
    \sup_{\omega\in\R}\, \abs{\omega}^{2a} \sum_{j=N}^\infty \Abs{\seqS{\hpsi^{(b)}, u_j}}^2
    % \leq \sum_{j=N}^\infty \sup_{\omega\in\R}\, \abs{\omega}^{2a} \Abs{\seqS{\hpsi^{(b)},
    % u_j}}^2
    < \infty, \ \forall a, b \in \NN,
  \end{align*}
  which is indeed the case since $\psi(t,x)=h(t)\tpsi(x)$ with $h\in\SzR$ and
  $\tpsi\in\LtpDo$. Moreover due to the energy identity \eqref{eq:energyident_SD}, the last
  expression tends to 0 as $N\to\infty$. This proves the convergence of \eqref{eq:svd_loKstari}
  in $\SzRLD$. The proof of the theorem is then complete.

  % the following is true Basic
  % calculus show that the last expression above can be bounded, up to some constant factor, by
  % \begin{align*}
  %   \sup_{\omega\in\R}\, \abs{\omega}^{2a} \sum_{j=N}^\infty \Abs{\seqS{\hpsi^{(b)}, u_j}}^2
  %   \leq  \sum_{j=N}^\infty \sup_{\omega\in\R}\, \abs{\omega}^{2a} \Abs{\seqS{\hpsi^{(b)},
  %   u_j}}^2 < \infty.
  % \end{align*}
  % Furthermore, since $\psi$ satisfies \eqref{eq:subspace_SzRLD}, this tends to 0 as
  % $N\to\infty$, which proves the convergence of \eqref{eq:svd_loKstari} in $\SzRLD$. This
  % completes the proof.
\end{proof}

% A case of special interest for Lemma \ref{lem:loKstar_causality} is when $\psi$ is separable:
% $\psi(t,x) = h(t)\tpsi(x)$ for some function $h$ of the classical Schwartz space $\Sz(\R)$ and
% $\tpsi$ of $\LtpDo$. Then \eqref{eq:subspace_SzRLD} is the same as \eqref{eq:pab_norms} and we
% have
% \begin{cor}\label{cor:separable_causality}
%   For a separable and causal function $\psi\in\SzRLDo$, the function $\vphi$ defined via
%   \eqref{eq:loKstar_causality} is causal.
% \end{cor}

\subsection{Integral representation and an existence result}
\label{sec:repr-solution}

We denote in the following
\begin{align}
  \label{eq:background_field}
  U(t,x) = h(t)\tU(x) = h(t)\sum_{j=1}^p a_j\Gamma(x-\xsj),
\end{align}
which is a solution to $\Delta U(t,x) = h(t)\tf(x) = f(t,x)$ and decays as $O(\abs{x}^{1-d})$
when $\abs x$ goes to infinity, due to condition \eqref{eq:neutrality}.

\begin{thm}\label{thm:repr_solution}
  Let $\alpha,\lambda, \lambda(\omega)$ be defined as in \eqref{eq:some_constants}. For the
  source term \eqref{eq:source_f} with $h$ fulfilling \eqref{eq:h_assump1}, the unique solution
  to \eqref{eq:governing_eq_trans} is given by
  \begin{align}
    \label{eq:repr_solution}
    u(t) = U(t) + \Sglf{D}{\vphi(t)},
  \end{align}
  where $\vphi\in\SzRLDo$ is causal and solves the following equation:
  \begin{align}
    \label{eq:repr_vphi_func_fulfill}
    \lKstarf D {\vphi} + \alpha \hKstarf D{\vphi'} = (1+\alpha \p_t) \frac{\partial U}{\partial \nu},
  \end{align}
  or equivalently in the frequency domain
  \begin{align}
    \label{eq:repr_hat_vphi_fulfill}
    \Paren{\lambda(\omega)I - \Kstar D}[\hphi] = \frac{\partial \hU}{\partial \nu}.
  \end{align}
  Furthermore, the solution \eqref{eq:repr_solution} is causal and belongs to $\HoRpX$.
\end{thm}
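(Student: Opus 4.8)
The plan is to \emph{construct} the unique solution by the single-layer ansatz \eqref{eq:repr_solution} and then check that each line of the transmission problem \eqref{eq:governing_eq_trans} can be satisfied by a suitable density $\vphi$. First I would note that, because the source points $\xsj$ lie in $\Dc$, the background field $U=h(t)\tU$ is harmonic in $D$, satisfies $\Delta U=f$ in $\Dc$, and is smooth (no jump) across $\p D$, while the single-layer potential $\Sglf D {\vphi(t)}$ is harmonic on $\R^d\setminus\p D$ and continuous across $\p D$. Hence for \emph{any} density the ansatz already fulfils the interior equation in $D$ (there $u$ is harmonic, so $\vep\Delta u'+\sigma\Delta u=0$), the equation $\Delta u=f$ in $\Dc$, and the continuity $u\vert_-=u\vert_+$ on $\p D$. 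The whole problem thus reduces to imposing the single flux transmission condition.

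The second step is to convert that flux condition into an equation for $\vphi$. Inserting the jump relation \eqref{eq:jump_formulas} for $\ddn{\Sglf D {\vphi}}\vert_\pm$ and $\ddn{\Sglf D {\vphi'}}\vert_-$ into $\vep\,\ddn{u'}\vert_-+\sigma\,\ddn{u}\vert_-=\ddn{u}\vert_+$ and collecting terms, the $\vphi$-terms assemble into $-(\sigma-1)(\lambda I-\Kstar D)[\vphi]$ and the $\vphi'$-terms into $-\vep(\frac12 I-\Kstar D)[\vphi']$; dividing by $\sigma-1$ and writing $\alpha=\vep/(\sigma-1)$ yields exactly \eqref{eq:repr_vphi_func_fulfill}. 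Taking the time Fourier transform turns $\vphi'$ into $i\omega\hphi$ and $\p_t U$ into $i\omega\hU$, giving $(1+i\alpha\omega)\,[\lambda(\omega)I-\Kstar D][\hphi]=(1+i\alpha\omega)\,\ddn{\hU}$. The key algebraic identity here is that $\frac{\lambda+i\alpha\omega/2}{1+i\alpha\omega}=\frac{\kappao+1}{2(\kappao-1)}=\lambda(\omega)$; cancelling the factor $1+i\alpha\omega$, which never vanishes for real $\omega$, then produces the clean frequency-domain equation \eqref{eq:repr_hat_vphi_fulfill}.

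The third step is to solve \eqref{eq:repr_hat_vphi_fulfill} and to control regularity and causality. The datum $\ddn{\hU}=\hat h(\omega)\,\ddn{\tU}$ is separable, and $\ddn{\tU}$ has zero mean because $\int_{\p D}\ddn{\tU}\,ds=\int_D\Delta\tU\,dx=0$ (the sources sit outside $D$), so it lives in $\LtpDo$. Setting $\hphi=\loKstarfi D {\ddn{\hU}}$ and using that $\lambda(\omega)I-\Kstar D$ is a bijection on $\LtpDo$, Lemma \ref{lem:loKstar_on_Schwarz} places $\hphi$ in $\SzRLDo$, so $\vphi\in\SzRLDo$. Causality of $\vphi$ is then precisely Theorem \ref{thm:separable_causality}, applied to the separable causal function $\ddn{U}=h(t)\,\ddn{\tU}$.

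Finally I would assemble the function-space and uniqueness statements. Since $\vphi$ is mean-zero, $\Sglf D {\vphi(t)}$ decays as $O(\abs{x}^{1-d})$, and $U$ decays by the neutrality condition \eqref{eq:neutrality}, so the decay condition holds; the Schwartz decay in $t$ together with the boundedness of $\phi\mapsto\Sglf D \phi$ from $\LtpD$ into $\Hloc$ gives $u\in\HoRpX$, while the initial condition $u(0)=0$ and causality of $u$ follow from those of $U$ and $\vphi$. Uniqueness is then immediate from Lemma \ref{lem:uniqueness_of_gov_eq}. I expect the principal obstacle to be the causality claim: inverting $\lambda(\omega)I-\Kstar D$ frequency-by-frequency could a priori destroy causality, and it is exactly the separable structure $h(t)\,\ddn{\tU}$ feeding the partial-fraction and convergence argument of Theorem \ref{thm:separable_causality} that rescues it; by comparison, the algebraic reduction to \eqref{eq:repr_hat_vphi_fulfill} is routine but must be carried out with care to expose the factor $\lambda(\omega)$.
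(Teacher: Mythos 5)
Your proposal is correct and follows essentially the same route as the paper: verify the first three transmission conditions directly from the single-layer ansatz, reduce the flux condition via the jump formula \eqref{eq:jump_formulas} to \eqref{eq:repr_vphi_func_fulfill} and its Fourier version \eqref{eq:repr_hat_vphi_fulfill}, obtain regularity and causality of $\vphi$ from Lemma \ref{lem:loKstar_on_Schwarz} and Theorem \ref{thm:separable_causality} applied to the separable causal datum $h(t)\,\partial\tU/\partial\nu$, and conclude decay, the $\HoRpX$ membership, and uniqueness via Lemma \ref{lem:uniqueness_of_gov_eq}. The only differences are cosmetic: you spell out the cancellation of the factor $1+i\alpha\omega$ and the zero-mean property of $\partial\tU/\partial\nu$, which the paper leaves implicit, while the paper gives a slightly more explicit integral estimate for the $H^1$ bound.
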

\begin{proof}  
  % For $u$ given by \eqref{eq:repr_solution}, one can check easily that the first and second
  % identities in \eqref{eq:governing_eq_trans} are verified. Further, since $U$ and $\Sglf D
  % {\vphi(t)}$ are both continuous across the boundary, the third identity holds also. The
  % fourth identity in \eqref{eq:governing_eq_trans} is equivalent to
  % \begin{align*}
  %   \vep\Paren{\p_\nu \Sglf D \vphi}'\Vm + \sigma \Paren{\p_\nu\Sglf D \vphi}\Vm
  %   - \Paren{\p_\nu\Sglf D \vphi}\Vp =
  %   (1-\sigma)\p_\nu U - \vep \p_\nu U',
  % \end{align*}
  % which becomes \eqref{eq:repr_vphi_func_fulfill} by applying the jump formula
  % \eqref{eq:jump_formulas} and by interchanging the derivative and the single layer
  % potential. Taking Fourier transform in the variable $t$ in \eqref{eq:repr_vphi_func_fulfill}
  % yields \eqref{eq:repr_hat_vphi_fulfill} after some simplifications.
  % 
  % In the time domain, the right hand term in \eqref{eq:repr_hat_vphi_fulfill} corresponds to
  % $h(t)\p_\nu \tU(x)$ which is separable, causal, and a function of $\SzRLDo$. Therefore by
  % Corollary \ref{thm:separable_causality} the function $\vphi\in\SzRLDo$ is causal. This proves
  % the causality of the solution $u$, as well as the fifth identity in
  % \eqref{eq:governing_eq_trans}, since $\Sglf D {\vphi(t)}$ decays as $O(\abs{x}^{1-d})$ for
  % $\vphi(t)$ being a $L^2_0(\p D)$ function.
  % 
  % Finally, $\vphi\in\SzRLDo$ being causal implies $\vphi(0)=0$ so the last identity in
  % \eqref{eq:governing_eq_trans} is also fulfilled. The uniqueness of the expression is a
  % consequence of Lemma \ref{lem:uniqueness_of_gov_eq}.
  % 

  For $u$ given by \eqref{eq:repr_solution}, one can check easily that the first and second
  identies in \eqref{eq:governing_eq_trans} are verified. Further, since $U$ and $\Sglf D
  {\vphi(t)}$ are both continuous across the boundary, the third identity also holds true. The
  fourth identity in \eqref{eq:governing_eq_trans} is equivalent to
  \begin{align*}
    \vep\Paren{\frac{\partial}{\partial \nu} \Sglf D \vphi}'\Vm + \sigma \Paren{\frac{\partial}{\partial \nu}\Sglf D \vphi}\Vm
    - \Paren{\frac{\partial}{\partial \nu}\Sglf D \vphi}\Vp =
    (1-\sigma)\frac{\partial U}{\partial \nu} - \vep \frac{\partial U'}{\partial \nu},
  \end{align*}
  which becomes \eqref{eq:repr_vphi_func_fulfill} by applying the jump formula
  \eqref{eq:jump_formulas} and by interchanging the derivative and the single layer
  potential. Taking Fourier transform in the $t$-variable  in \eqref{eq:repr_vphi_func_fulfill}
  yields \eqref{eq:repr_hat_vphi_fulfill} after some simplifications.

  In the time domain, the term on the right-hand side of \eqref{eq:repr_hat_vphi_fulfill} corresponds to
  $h(t) \frac{\partial \tU(x)}{\partial \nu} $ which is separable, causal, and belongs to $\SzRLDo$. Therefore by
  Corollary \ref{thm:separable_causality} the function $\vphi\in\SzRLDo$ is causal. This proves
  the causality of the solution $u$, as well as the fifth identity in
  \eqref{eq:governing_eq_trans}, since $\Sglf D {\vphi(t)}$ decays as $O(\abs{x}^{1-d})$ for
  $\vphi(t)$ being an $L^2_0(\p D)$ function.

  Finally, $\vphi\in\SzRLDo$ being causal implies $\vphi(0)=0$ so the last identity in
  \eqref{eq:governing_eq_trans} is also fulfilled. 

  It is clear that $U\in\HoRpX$. To prove $u\in \HoRpX$, it suffices
  to show for any compact $K\subset \R^d$ the boundness of:
  \begin{equation}
    \label{eq:SglHoBoundness}
    I_1+I_2 = \int_{\R_+}\Norm{\Sglf D {\vphi(t)}}_{H^1(K)}^2\, dt + \int_{\R_+}\Norm{\Sglf D
      {\vphi'(t)}}_{H^1(K)}^2\, dt.
  \end{equation}
  Note that
  \begin{align*}
    I_1 = \int_{\R_+} \int_K\Abs{\Sglf D {\vphi(t)} (x)}^2\, dx + \int_{\R_+}\int_K\Abs{\nabla\Sglf D
      {\vphi(t)} (x)}^2\, dx,
  \end{align*}
  and  the first term in $I_1$ can be estimated as
  \begin{align*}
    \int_{\R_+} \int_K \Abs{\intbd{D}{\Gamma(x-y)\vphi(t,y)}{y}}^2\, dx\, dt \leq
    \int_{\R_+} \int_K \Norm{\Gamma(x-\cdot)}_{L^2(\p D)}^2 \Norm{\vphi(t)}_{L^2(\p D)}^2
    \, dx \, dt
  \end{align*}
  and is bounded since the singularity of $\Gamma$ is integrable and $\vphi$
  is a function of $\SzRLDo$. Similarly one can prove the boundedness for the other terms,
  therefore $u\in \HoRpX$.

  The uniqueness of the expression is a consequence of Lemma
  \ref{lem:uniqueness_of_gov_eq} and then the well-posedness of
  \eqref{eq:governing_eq_trans} is now established. \end{proof}

% \begin{cor}
%   \label{cor:repr-solut-multi}
%   For multiples inclusions $D_1\ldots D_L$, the solution is given by
%   \begin{align}
%     \label{eq:repr_solution_multi}
%     u(t) = U(t) + \sum_{l=1}^L\Sglf{D_l}{\vphi_{l}(t)},
%   \end{align}
%   where $\vphi_l\in\SzRLDo$ is causal and solves the following system:
%   \begin{equation}
%     \label{eq:repr_vphi_func_fulfill_multi}
%     \begin{aligned}
%       \llKstarf{D_l} {\vphi_l} + \alpha_l \hKstarf{D_l}{\vphi_l'} - & \alpha_l \sum_{k\neq l}
%       \frac{\p}{\p\nu_l} \Sglf{D_k}{\vphi_k'} - \sum_{k\neq l}
%       \frac{\p}{\p\nu_l} \Sglf{D_k}{\vphi_k}\\
%       & = (1+\alpha_l \p_t)\frac{\p}{\p\nu_l} U, \ \text{ for }l=1\ldots L.
%     \end{aligned}
%   \end{equation}
% \end{cor}

% \section{Fish model}
% \label{sec:fish-model}
% A more realistic model of the electric fish has been established in
% \cite{ammari_modeling_2013,ammari_shape_2014}, where the wave-type source is considered. 
% We have established the 

\section{Time-dependent GPTs and asymptotic expansions}
\label{sec:time-depend-polar}

In this section we extend the concept of generalized polarization tensor (GPT) to the time
domain\footnote{The GPT as it is defined in this paper is actually the so-called
  \emph{contracted} GPT introduced in \cite{ammari_enhancement_2013}.}. The GPTs will be
the features of the target to be recovered from measurements. For the sake of simplicity,
we only discuss the two-dimensional case here. The three-dimensional case can be treated
by following the same approach as in \cite{ammari_invariance_2014}.
% are building blocks of the more useful concept of contracted GPTs which

For the domain $D$ and the order $m,n\in\N$, the GPT in the frequency domain (at the frequency
$\omega$) is a $2\times 2$ matrix of the following form \cite{ammari_target_2014}
\begin{align}
  \label{eq:def_Mmn}
  \wMmn = \wMmn(\omega; D) =
  \begin{pmatrix}
    \wMcc & \wMcs\\
    \wMsc & \wMss
  \end{pmatrix},
\end{align}
where $\wMcs$ is defined as
\begin{align}
  \label{eq:def_Mcc}
  \wMcs(\omega;D) = \intbd D {S_n(y) \loKstarfi D {\frac{\partial C_m}{\partial \nu}}(y)} y
\end{align}
with $C_m$ and $S_m$ being respectively the real and imaginary parts of the harmonic
polynomial $(x_1+ix_2)^m$, and $\lambda(\omega)$ being defined as in
\eqref{eq:some_constants}. The other terms $\wMcc, \wMsc, \wMss$ in \eqref{eq:def_Mmn} are
defined in a similar way, by replacing the symbols $c$ and $s$ by the corresponding
polynomials $C_m$ (or $C_n$) and $S_m$ (or $S_n$) respectively. The time-dependent GPTs
$\Mmn(t;D)$ is also a $2\times 2$ matrix consisting of the inverse Fourier transform (in the
sense of distribution) of each term of $\wMmn(\omega;D)$.

In the following we denote by $\wM=\wM(\omega;D) = (\wMmn)_{mn}$ the block matrix of the GPTs
in the frequency domain, and $\mM = \mM(t; D) = (\Mmn)_{mn}$ in the time domain.

\subsection{Properties of the time-dependant  GPTs}
\label{sec:prop-time-depend}

The operator $\loKstari D$ is uniformly continuous in $\omega$ with respect to the operator
norm $\norm{\cdot}_\LtpD$, and converges to $(\hlf I - \Kstar D)^{-1}$ as $\omega$ tends to
infinity. In the limit case, $\wMmn$ becomes independent of the frequency but remains well
defined since $(\hlf I - \Kstar D)$ is invertible on $L^2_0(\p D)$. Hence we obtain the
following result.

\begin{prop}\label{prop:wM_boundness}
  For any $m,n\in\N$ and as a function of $\omega$, each entry of $\wMmn(\omega;D)$ is
  uniformly continuous and bounded. Furthermore,
  \begin{align*}
    \lim_{\abs{\omega} \rightarrow \infty}\wMmn(\omega; D) = \wMmn(\infty;D),
  \end{align*}
  where $\wMmn(\infty;D)$ is some well-defined matrix.
\end{prop}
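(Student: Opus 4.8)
The plan is to deduce all three assertions from the operator-level properties of the resolvent $\loKstari D$ recalled just before the statement, by exploiting that in each entry of $\wMmn(\omega;D)$ the only $\omega$-dependent object is this resolvent, which enters through a fixed continuous bilinear pairing. Concretely, the four entries all share the structure of the representative $\wMcs$ in \eqref{eq:def_Mcc}, namely an $L^2(\p D)$ pairing $\intbd D {S_n(y)\,\loKstarfi D {\frac{\partial C_m}{\partial \nu}}(y)} y$ in which the density to which the operator is applied is a normal derivative of a harmonic polynomial ($\frac{\partial C_m}{\partial\nu}$ or $\frac{\partial S_m}{\partial\nu}$) and the function paired against it is a trace of a harmonic polynomial ($C_n$ or $S_n$); both are smooth and independent of $\omega$. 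Each density moreover lies in $\LtpDo$, since the divergence theorem gives $\intbd D {\frac{\partial C_m}{\partial\nu}(y)} y = \int_D \Delta C_m\,dx = 0$ (and likewise for $S_m$), so that $\loKstari D$, a bijection of $\LtpDo$, is legitimately applied.

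With this structure in hand, a single Cauchy--Schwarz estimate on $L^2(\p D)$ delivers all three properties. For uniform continuity I would write, for any two frequencies $\omega_1,\omega_2$,
\begin{align*}
  \Abs{\wMcs(\omega_1)-\wMcs(\omega_2)}
  \leq \norm{S_n}_\LtpD\,\Norm{\frac{\partial C_m}{\partial\nu}}_\LtpD\,
  \Norm{\Paren{\lambda(\omega_1)I-\Kstar D}^{-1}-\Paren{\lambda(\omega_2)I-\Kstar D}^{-1}}_\LtpD,
\end{align*}
so that the uniform operator-norm continuity of $\omega\mapsto\loKstari D$ transfers verbatim to the entry. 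The same estimate applied to $\wMcs(\omega)$ alone gives $\Abs{\wMcs(\omega)}\leq \norm{S_n}_\LtpD\,\Norm{\frac{\partial C_m}{\partial\nu}}_\LtpD\,\norm{\loKstari D}_\LtpD$, which is bounded once the resolvent is bounded uniformly in $\omega$. Finally, since $\loKstari D\to(\hlf I-\Kstar D)^{-1}$ in operator norm as $\abs\omega\to\infty$, the same bound forces $\wMcs(\omega)\to\intbd D {S_n(y)\,(\hlf I-\Kstar D)^{-1}{\frac{\partial C_m}{\partial\nu}}(y)} y$; collecting the four limiting entries defines the matrix $\wMmn(\infty;D)$.

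The step I expect to be the crux is the uniform-in-$\omega$ boundedness of $\norm{\loKstari D}_\LtpD$ near $\abs\omega=\infty$, because the elementary estimate $\norm{\loKstari D}_\LtpD\leq(\abs{\lambda(\omega)}-\hlf)^{-1}$ degenerates there as $\lambda(\omega)\to\hlf$. The resolution is precisely the restriction to $\LtpDo$: on the full space $\hlf$ is an eigenvalue of $\Kstar D$ and $(\hlf I-\Kstar D)$ is not invertible, whereas on $\LtpDo$ the eigenvalues $\mu_j$ of the compact operator $\Kstar D$ obey $\abs{\mu_j}<\hlf$ and accumulate only at $0$, so $\sup_j\abs{\mu_j}<\hlf$ and $(\hlf I-\Kstar D)^{-1}$ is bounded on $\LtpDo$. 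Combining this boundedness of the limit operator with the operator-norm convergence yields a uniform bound for all large $\abs\omega$, while on the remaining compact range of frequencies $\norm{\loKstari D}_\LtpD$ is continuous and hence bounded; together these close the boundedness assertion and complete the argument.
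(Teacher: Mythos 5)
Your proposal is correct and follows essentially the same route as the paper, which proves the proposition only by the remark preceding it: the uniform continuity and operator-norm convergence of $\omega\mapsto\left(\lambda(\omega)I-\mathcal{K}_D^\star\right)^{-1}$ to $\left(\frac{1}{2}I-\mathcal{K}_D^\star\right)^{-1}$, the latter being invertible on $L^2_0(\partial D)$, transferred to each entry through the fixed $L^2(\partial D)$ pairing. Your additional details --- the zero-mean property of $\partial C_m/\partial\nu$ via the divergence theorem and the Cauchy--Schwarz step --- merely make explicit what the paper leaves implicit.
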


\paragraph{GPT as distribution}
\label{sec:contracted-gpts-as}
For a general shape $D$ its GPT $\wMmn(\omega;D)$ does not exhibits any decay as $\omega$
tends to infinity, and we interpret the time domain $\Mmn$ as a distribution in
$\SzRp$. Furthermore, the entries of $\wMmn$ are $\Loloc$ functions, so we define the
action of $\mMcs$ in the frequency domain as
% This is in contrast with the fact that $\mNmn$ defined whose entries are regular functions,
\begin{align}
  \label{eq:action_wMcs_distrib}
  \dualSSp{\wMcs, \vphi} := \int_\R \vphi(\omega) \wMcs(\omega) \, d\omega
  % \dualSSp{\wMcs, \vphi} := \int_\R \vphi(\omega) \intbd D {C_n(y) \loKstarfi D {\frac{\partial S_m}{\partial \nu}}
  % (y)} y \, d\omega
\end{align}
and similarly for the other entries $\mMcc, \mMsc,$ and  $\mMss$.

\begin{prop}
  \label{prop:CGPT_as_causal_distrib}
  The distribution $\mMmn(t;D)\in\SzRp$ is causal, which means that for any causal function
  $\vphi\in\SzR$, 
  \begin{align}
    \label{eq:def_causal_distrib}
    \dualSSp{\mMmn, \tphi} = 0, \ \text{ where } \tphi(t) := \vphi(-t),
  \end{align}
  holds.
\end{prop}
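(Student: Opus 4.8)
The plan is to carry the causality condition over to the frequency domain and reduce it to Theorem \ref{thm:separable_causality}, which already packages the causality-preserving behaviour of $\loKstari D$ on separable causal data. Since $\mMcs$ is by definition the inverse Fourier transform of $\wMcs$, whose action is the $\Loloc$ pairing \eqref{eq:action_wMcs_distrib}, I would first use the distributional identity $\dualSSp{\mathcal F^{-1}\wMcs, \tphi} = \dualSSp{\wMcs, \mathcal F^{-1}\tphi}$ together with $(\mathcal F^{-1}\tphi)(\omega) = \frac{1}{2\pi}\hat\vphi(\omega)$ (which holds because $\tphi(t)=\vphi(-t)$) to rewrite, for any causal $\vphi\in\SzR$,
\begin{align*}
  \dualSSp{\mMcs, \tphi} = \frac{1}{2\pi}\int_\R \hat\vphi(\omega)\, \wMcs(\omega)\, d\omega.
\end{align*}

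Next I would substitute the defining formula \eqref{eq:def_Mcc} and, using the linearity of $\loKstari D$ and of the boundary integral, pull the scalar factor $\hat\vphi(\omega)$ through the operator, obtaining
\begin{align*}
  \dualSSp{\mMcs, \tphi} = \frac{1}{2\pi}\intbd D {S_n(y)\Paren{\int_\R \loKstarfi D {\hat\vphi(\omega)\frac{\partial C_m}{\partial \nu}}(y)\, d\omega}} y.
\end{align*}
The decisive point is that $\hat\vphi(\omega)\frac{\partial C_m}{\partial \nu}$ is exactly the $t$-Fourier transform of the separable function $\psi(t,y)=\vphi(t)\frac{\partial C_m}{\partial \nu}(y)$, which is causal because $\vphi$ is, and which lies in $\SzRLDo$; here one needs $\frac{\partial C_m}{\partial \nu}\in\LtpDo$, i.e. zero boundary mean, and this follows from the harmonicity of $C_m$ via $\int_{\p D}\frac{\partial C_m}{\partial \nu}=\int_D \Delta C_m=0$. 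Theorem \ref{thm:separable_causality} then yields a causal $\Phi\in\SzRLDo$ with $\hat\Phi(\omega)=\loKstarfi D {\hat\vphi(\omega)\frac{\partial C_m}{\partial \nu}}$.

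Finally, the inner integral is $\int_\R \hat\Phi(\omega)\,d\omega = 2\pi\,\Phi(0)$ by Fourier inversion at $t=0$, and since $\Phi$ is a causal Schwartz function it satisfies $\Phi(0)=0$; hence $\dualSSp{\mMcs,\tphi}=0$. The entries $\mMcc,\mMsc,\mMss$ are treated in exactly the same manner, replacing the pair $(C_m,S_n)$ by the relevant polynomials. The step I expect to be the only real obstacle is bookkeeping rather than conceptual: justifying the interchange of the $\omega$-integration with the operator $\loKstari D$ and with the continuous functional $\intbd D {S_n\,\cdot}{y}$, and pinning down the Fourier normalization so the $2\pi$ factors are consistent. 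Both are under control because, by Theorem \ref{thm:separable_causality}, $\omega\mapsto\hat\Phi(\omega)$ is a Schwartz $\LtpDo$-valued function, so the integral converges in $\LtpD$ and commutes with bounded linear functionals.
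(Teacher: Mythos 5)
Your proposal is correct and follows essentially the same route as the paper's own proof: pass to the frequency domain by duality, recognize $\hat\vphi(\omega)\frac{\partial C_m}{\partial \nu}$ as the Fourier transform of a separable causal element of the relevant Schwartz space, invoke Theorem \ref{thm:separable_causality} to get a causal $\Phi$, and conclude from $\int_\R \hat\Phi(\omega)\,d\omega = 2\pi\,\Phi(0) = 0$. You are in fact somewhat more explicit than the paper about the $2\pi$ normalization, the zero-mean property of $\frac{\partial C_m}{\partial \nu}$, and the interchange of the $\omega$-integral with the boundary pairing, all of which the paper leaves implicit.
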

\begin{proof}
  We prove the result for $\mMcs$ only. The result for the other entries can be proved similarly. 
  By Fourier transform of the distribution we have 
  $$ 2\pi \dualSSp{\mMcs, \tphi} =
  \dualSSp{\wMcs, \hphi},$$ and by \eqref{eq:action_wMcs_distrib},
  \begin{align*}
    \dualSSp{\wMcs, \hphi} &= \intbd D {S_n(y)
      \int_\R \underbrace{\loKstarfi D {\hphi(\omega)\frac{\partial C_m}{\partial \nu}} (y)}_{\hpsi} \,d\omega} y,
  \end{align*}
  where the function $\hphi \frac{\partial C_m}{\partial \nu}\in\SzRLDo$ in the time domain is separable and
  causal. By Corollary \ref{thm:separable_causality}, the function $\psi\in\SzRLDo$ defined in
  the expression above via $\hpsi$ is causal in the time domain, hence
  \begin{align*}
    \dualSSp{\mMcs, \tphi} &= \seqD{S_n, \psi(0)} = 0,
  \end{align*}
  due to the fact $\psi(0)=0$. This completes the proof.
\end{proof}

\subsection{Asymptotic expansion}
\label{sec:asympt-expans}
% Then the following expansion holds:
% \begin{align}
%   \label{eq:GPT_expansion}
%   \hu(\omega,x) - \hU(\omega, x) = \sum_{j=1}^p
%   a_j %\sum_{\substack{\abs\alpha\geq 1\\\abs\beta\geq 1}}
%   \sum_{\abs\alpha,\abs\beta\geq 1}\frac{\p^\alpha\Gamma(z-\xsj)}{\alpha!}
%   \frac{\p^\beta\Gamma(z-x)}{\beta!} \hat h(\omega)\wMab(\omega; B)
% \end{align}

% or equivalently in the time domain:
% \begin{align}
%   \label{eq:GPT_expansion_time}
%   u(t,x) - U(t, x) = \sum_{j=1}^m a_j \sum_{\abs\alpha,\abs\beta\geq
%   1}\frac{\p^\alpha\Gamma(z-\xsj)}{\alpha!}  \frac{\p^\beta\Gamma(z-x)}{\beta!} (h*\Mab)(t;
%   B)
% \end{align}
% where $h*\Mab$ denotes the time convolution.

Taking the Fourier transform of the representation formula \eqref{eq:repr_solution}, it follows that
\begin{align*}
  \hat u(\omega, x) = \hU(\omega, x) + \Sglf D {\hat\vphi(\omega)}(x),
\end{align*}
and since $\lambdao I -\Kstar D$ is invertible, plugging \eqref{eq:repr_hat_vphi_fulfill} into
the identity above yields 
\begin{align}
  \label{eq:hat_repr_solution}
  \hat u(\omega, x) = \hU(\omega, x) + \hat h(\omega) \Sglf D{\loKstarfi D {\frac{\partial \tU}{\partial \nu}}}(x).
\end{align}

Let $z\in\R^d$ be an estimated position of the target $D$. % by some location algorithm.
% and write $D=z+B$ for $B\subset\R^d$ a Lipschitz domain.
For the source $x_s=\set{x_s^1,\ldots,x_s^p}$ with $\xsj\in \Dc$ and the receiver
$x_r\in\Dc$, let $(\rhosj,\thesj)$ and $(\rho_r, \theta_r)$ be the polar coordinate of
$\xsj-z,j=1\ldots p$ and $x_r-z$ respectively. We introduce the $1\times 2$ matrices
\begin{align}
  \label{eq:Asm_Brn}
  A_{sm} = \sum_{j=1}^p \frac{a_j}{2\pi m \rhosj}
  \begin{pmatrix} 
    \cos(m\thesj) & \sin(m\thesj)
  \end{pmatrix},\ \ 
  B_{rn} =     
  \frac{1}{2\pi n \rho_r}
  \begin{pmatrix}
    \cos(n\theta_r) & \sin(n\theta_r).
  \end{pmatrix}
\end{align}
Then by expanding the fundamental solution $\Gamma$  in \eqref{eq:hat_repr_solution} into its Taylor series as
done in \cite{ammari_target_2014}, we can establish an asymptotic expansion relating the data
with the GPTs:
\begin{align}
  \label{eq:GPT_expansion}
  \hu(\omega,x_r) - \hU(\omega, x_r) = \sum_{m,n=1}^K A_{sm} \hh(\omega) \wMmn(\omega;
  D-z)B_{rn}^\top + E_K,
  % \hu(\omega,x_r) - \hU(\omega, x_r) = \underbrace{\sum_{m,n=1}^K A_{sm} \hh(\omega) \wMmn(\omega;
  % D-z)B_{rn}^\top}_{\mL(\hh(\omega)\wM(\omega;D-z))} + E_K,
\end{align}
where $D-z$ denotes the translation of $D$ by the vector $-z$, $K$ is the truncation order and
$E_K$ is the truncation error which decays exponentially to $0$ as $K$ increases
\cite{ammari_target_2014}. 

\subsection{Linear system}
\label{sec:linear-system}

In the time domain, the perturbation of the field corresponding to the source $x_s$ and recorded
by the receiver $x_r$ constitutes the $(s,r)$-th entry of the multi-static response  (MSR) matrix   $\mV(t) =
(\Vsr(t))_{sr}$ at the time $t$:
\begin{align}
  \label{eq:Vsr_def}
  \Vsr(t)=u(t,x_r) - U(t,x_r),
\end{align}
and its Fourier transform in $t$ is just the term on the left-hand side of \eqref{eq:GPT_expansion} that we
denote by $\mhV(\omega) =(\hVsr(\omega))_{sr}$. By introducing a linear operator $\mL$ in
\eqref{eq:GPT_expansion} and dropping the truncation error $E_K$, we can rewrite it as a
linear system:
\begin{align}
  \label{eq:Vsr_linsys_freq}
  \mhV(\omega) \simeq \mL(\hh(\omega)\wM(\omega;D-z)),
\end{align}
where $\wM(\omega;D-z)$ is a $2K \times 2K$ block matrix. Remark that the operator $\mL$
depends only on the measurement system (\ie the reference point $z$, the sources $x_s$ and
receivers $x_r$) and the truncation order, and that the data $\mhV(\omega)$ or $\mV(t)$
can be contaminated by some white noise.

\subsubsection{Filtered GPT}
\label{sec:filtered-gpt}

By Proposition \ref{prop:CGPT_as_causal_distrib} the GPT $\mM$ in the time domain is a
distribution, however $\mM$ ``filtered'' by $h$ becomes a
regular function. To show this we introduce the concept of \emph{Filtered GPT}:
\begin{dfn}
  \label{dfn:FGPT}
  The filtered GPT $\mNmn(t;D)$ in the time domain is a $2\times 2$ matrix which corresponds in
  the frequency domain to
  \begin{align}
    \label{eq:wMmnh_freq}
    \wNmn(\omega;D) =  
    \begin{pmatrix}
      \wNcc & \wNcs\\
      \wNsc & \wNss
    \end{pmatrix}= \hh(\omega) \wMmn(\omega;D).
  \end{align}
\end{dfn}

\begin{prop}
  \label{prop:Nmn_hM}
  % For a given domain $D$ and indexes $m,n$,
  Let $h\in\SzR$ be causal and the filtered GPT $\mNmn(t;D)$ defined as in Definition
  \ref{dfn:FGPT}.  Then each entry of $\mNmn$ in the time domain is causal and belongs to
  $\SzR$.
\end{prop}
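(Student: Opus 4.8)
The plan is to identify each scalar entry of $\mNmn$ as the image of a causal, $\SzRLDo$-valued function under a fixed continuous linear functional, so that the desired causality and Schwartz decay are inherited directly from Theorem \ref{thm:separable_causality}. I would treat the entry $\mNcs$ in full; the entries $\mNcc$, $\mNsc$, $\mNss$ follow verbatim after interchanging the polynomials $C_m,S_m$ and $C_n,S_n$.

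First I would check that the boundary datum $\frac{\partial C_m}{\partial \nu}$ lies in $\LtpDo$: since $C_m$ is harmonic, the divergence theorem gives $\intbd D {\frac{\partial C_m}{\partial \nu}} y = \int_D \Delta C_m\,dx = 0$. As $h\in\SzR$ is causal by hypothesis, the separable function $h(t)\frac{\partial C_m}{\partial \nu}(x)$ is then causal and belongs to $\SzRLDo$. Using Definition \ref{dfn:FGPT} together with the linearity of $\loKstari D$, I would rewrite the filtered entry in the frequency domain as
\begin{align*}
  \wNcs(\omega;D) = \hh(\omega)\,\intbd D {S_n(y)\,\loKstarfi D {\frac{\partial C_m}{\partial \nu}}(y)} y = \seqD{S_n,\hpsi(\omega)},
\end{align*}
where $\hpsi(\omega):=\loKstarfi D {\hh(\omega)\frac{\partial C_m}{\partial \nu}}$. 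This is exactly the object to which Theorem \ref{thm:separable_causality} applies, and that theorem supplies a time-domain function $\psi\in\SzRLDo$ which is causal. Taking the inverse Fourier transform therefore yields $\mNcs(t)=\seqD{S_n,\psi(t)}$.

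It then remains to pass the two properties of $\psi$ to the scalar function $\mNcs$. Causality is immediate, since $\psi(t)=0$ for $t<0$ forces $\seqD{S_n,\psi(t)}=0$ for $t<0$. For membership in $\SzR$ I would bound each semi-norm by Cauchy--Schwarz in $L^2(\partial D)$, using that differentiation in $t$ commutes with the fixed pairing against $S_n$:
\begin{align*}
  \pab(\mNcs) = \sup_{t\in\R}\abs{t}^a\Abs{\seqD{S_n,\psi^{(b)}(t)}} \leq \norm{S_n}_\LtpD\,\pab(\psi) < \infty,
\end{align*}
which is finite precisely because $\psi\in\SzRLDo$. I expect the only delicate point to be this final transfer: confirming that pairing against the fixed element $S_n$ is a bounded functional that commutes with $\p_t$ and preserves the support condition, so that causality and every Schwartz semi-norm descend from $\psi$ to $\mNcs$. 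All of the substantive analysis — the simultaneous regularity and causality of $\loKstari D$ acting on a separable causal datum — has already been carried out in Theorem \ref{thm:separable_causality}, so the present statement is essentially a corollary of it.
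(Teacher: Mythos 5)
Your proposal is correct and follows essentially the same route as the paper: both identify $\wNcs(\omega)$ as the pairing of $S_n$ against $\loKstarfi D{\hh(\omega)\frac{\partial C_m}{\partial \nu}}$, invoke Theorem \ref{thm:separable_causality} to get a causal $\SzRLDo$-valued function, and transfer causality and Schwartz decay through the continuous linear functional $\seq{S_n,\cdot}_{L^2(\p D)}$. You merely spell out two steps the paper leaves implicit (the zero-mean check for $\frac{\partial C_m}{\partial \nu}$ and the Cauchy--Schwarz bound on the semi-norms), which is fine.
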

\begin{proof}
  We prove this result only for the entry $\wNcs$. By definition
  \begin{align*}
    \wNcs(\omega) := \hh(\omega)\wMcs(\omega;D) = \intbd D {S_n(y) \underbrace{\loKstarfi D
        {\hh(\omega) \frac{\partial C_m}{\partial \nu}}}_{\hphi(\omega)}(y)} y,
  \end{align*}
  where $\hphi$ in time domain is causal and a function of $\SzRLDo$, as a consequence of
  Corollary \ref{thm:separable_causality}. It is easy to check that the inner product
  $\seq{S_n, \cdot}_{L^2(\p D)}$ defines a continuous linear mapping from $\SzRLD$ to $\SzR$,
  hence $\wNcs$ as well as $\Ncs$ is in $\Sz(\R)$.
\end{proof}

In the following we denote the block matrix $\mN=\mN(t;D)=(\mNmn)_{mn}$, then the linear
system \eqref{eq:Vsr_linsys_freq} can be rewritten in the time domain as
\begin{align}
  \label{eq:Vsr_linsys_time}
  \mV(t) \simeq \mL(\mN(t; D-z)). %= \mL (h*\mM(t)),
\end{align}
Although the two linear systems \eqref{eq:Vsr_linsys_freq} and \eqref{eq:Vsr_linsys_time}
are equivalent, in practice it is prefered to consider \eqref{eq:Vsr_linsys_time} since
the measurements are taken directly in the time domain. By inverting $\mL$ one can
estimate $\mN(t;D-z)$ from data, and the results in \cite{ammari_target_2014} about the
maximum resolving order as well as the stability remain valid here.

\begin{rmk}
  Notice that one cannot expect to recover stably the GPT $\mM$ from the filtered GPT
  $\mN$ by a deconvolution procedure, since the pulse-type signal $h$ in practice is always
  band-limited, while in general $\mM$ is not band-limited function, as shown in
  Proposition \ref{prop:wM_boundness}. 
\end{rmk}

% There might exists an optimal band (or a wave form $h$) where $h*\mM(t)$ is best for shape
% identification.  and the convolution $h*\mM(t)$ is between $h$ and each coefficient of the
% GPTs matrix $\mM(t;D)$.

\section{Shape identification with pulse-type signals}
\label{sec:shape-ident-with}

We aim to identify a target $D$ from a dictionary of reference shapes
$\set{B_1\ldots B_N}$ up to some rigid transformation and dilation. In this section we propose
a time domain multi-scale method for shape identification. For the sake of simplicity, we
assume that the target and all reference shapes have the same physical parameters
$\sigma, \e$, %, hence we are only concerned by the problem of geometry here.
which can be estimated from data via a nonlinear parameter fitting procedure
as described in \cite{ammari_modeling_2013}.

\subsection{Invariant properties of the filtered GPTs}
\label{sec:invar-prop-conv}

In \cite{ammari_target_2014} and \cite{ammari_shape_2014} the properties of the GPTs
$\hat\mM(\omega;D)$ with respect to the scaling and rigid motion have been
investigated. The filtered GPTs $\mN(t;D)$ being defined in the frequency domain as
$\hat\mN(\omega;D)=\hat h(\omega) \hat\mM(\omega;D)$ inherit naturally all of these
properties. The following result is a direct consequence of the results in  \cite{ammari_target_2014} and \cite{ammari_shape_2014} and its proof is skipped here.

\begin{prop}
  \label{prop:invar-prop-time}
  The matrix of the filtered GPTs $\mN(t; D)$ is symmetric. Moreover,
  for arbitrary $z\in\R^d$, $s>0$ and $R\in SO(\R^d)$, with $SO(\R^d)$ being the rotation group in $\R^d$, the following identity holds for  the $d \times d$ square matrix $\mN_{11}$:
  \begin{align}
    \label{eq:invariant_N}
    \mN_{11}(t; z+sRD) = s^d R\, \mN_{11}(t; D)\, R^\top.
  \end{align}  
  Furthermore, the singular values of  $\mN_{11}(t; D)$ fulfills
  \begin{align}
    \label{eq:invariant_fro_N}
    % \norm{\mN_{11}(t, z+sRD)}_{F} = s^d \norm{\mN_{11}(t, D)}_{F}
    \tau_n(t; z+sRD) = s^d \tau_n(t; D),\ \ n=1\ldots d . 
  \end{align}
  % These assertions hold also in the frequency domain for $\hat\mN(\omega,D)$. 
\end{prop}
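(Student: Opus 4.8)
The plan is to transfer the known scaling and rigid-motion covariance of the frequency-domain GPTs $\hat\mM(\omega;D)$, established in \cite{ammari_target_2014, ammari_shape_2014}, to the filtered GPTs through the defining relation $\hat\mN(\omega;D)=\hat h(\omega)\,\hat\mM(\omega;D)$, and then to pass to the time domain by inverse Fourier transform. The essential observation is that $\hat h(\omega)$ is a \emph{scalar} factor applied entrywise to the matrix $\hat\mM(\omega;D)$, so multiplication by $\hat h(\omega)$ commutes with every matrix operation appearing in the statement: transposition, and the conjugation $A\mapsto RAR^\top$ by an orthogonal matrix.

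First I would recall the two structural facts about the frequency-domain GPTs. For each fixed $\omega$, the matrix $\hat\mM(\omega;D)$ is symmetric, and the leading block obeys the covariance relation $\hat\mM_{11}(\omega; z+sRD)=s^d R\,\hat\mM_{11}(\omega;D)\,R^\top$; in particular the $11$-block is translation invariant, which is why $z$ does not appear on the right-hand side. This is special to $\mN_{11}$: for higher blocks a translation would introduce lower-order contributions, so only the first-order block transforms by the clean formula. Multiplying these two identities by $\hat h(\omega)$ gives at once $\hat\mN(\omega;D)^\top=\hat\mN(\omega;D)$ and $\hat\mN_{11}(\omega; z+sRD)=s^d R\,\hat\mN_{11}(\omega;D)\,R^\top$, valid for every $\omega\in\R$.

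Next I would pass to the time domain. By Proposition \ref{prop:Nmn_hM} every entry of $\mN$ belongs to $\SzR$, so each entry is the genuine inverse Fourier transform of its frequency-domain counterpart rather than merely a distribution. Since the inverse Fourier transform is linear and acts entrywise, it commutes with transposition and with the fixed linear map $A\mapsto s^dRAR^\top$; hence the two identities above hold verbatim with $\hat\mN(\omega;\cdot)$ replaced by $\mN(t;\cdot)$, for every $t$. This yields the symmetry of $\mN(t;D)$ and the covariance \eqref{eq:invariant_N}.

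Finally, the singular-value statement \eqref{eq:invariant_fro_N} follows from \eqref{eq:invariant_N} by a standard linear-algebra fact: the map $A\mapsto RAR^\top$ multiplies $A$ on the left and right by orthogonal matrices and therefore leaves the singular values unchanged, while the overall factor $s^d$ scales each singular value by $s^d$. I do not expect a genuine obstacle; the only point requiring care is the justification that the pointwise-in-$\omega$ identities transfer to pointwise-in-$t$ identities, which is exactly what the Schwartz-class regularity from Proposition \ref{prop:Nmn_hM} supplies. The symmetry and covariance of $\hat\mM$ themselves are imported wholesale from \cite{ammari_target_2014, ammari_shape_2014}, as the surrounding text indicates.
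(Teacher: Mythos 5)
Your proof is correct and follows exactly the route the paper intends: the paper skips the proof, declaring the proposition a direct consequence of the frequency-domain symmetry and covariance of $\hat{\mM}$ from \cite{ammari_target_2014,ammari_shape_2014} together with the scalar relation $\hat\mN = \hat h\,\hat\mM$, which is precisely what you spell out. Your additional care in invoking Proposition \ref{prop:Nmn_hM} to justify the pointwise-in-$t$ transfer is a reasonable elaboration of the same argument, not a different approach.
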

% \begin{proof}
%   Trivial.
% \end{proof}
We assume for the rest of the paper that the singular values are sorted in a decreasing order:
$\tau_1(t; D)\geq \tau_2(t;D)\ldots\geq \tau_d(t;D)\geq 0$.

\subsection{Shape descriptors based on the polarization tensor}
\label{sec:features-based-first}

% \subsubsection{Stability of the first order}
% \label{sec:stability-first}

In \cite{ammari_target_2014} the authors constructed the GPT-based shape descriptors
applicable for the shape identification in electro-sensing. These descriptors have
infinite orders and allow to distinguish between complex shapes using only one frequency.
% such as letters and comparable to the moment based descriptors.
Nonetheless, this approach requires high order GPTs (\eg,
$\wMmn(\omega;D)$ for $m,n\geq 2$) which are difficult to obtain in practice, for example with
far field and limited angle of measurement view. It has then limited feasibility. 

The situation here for the filtered GPTs $\mN$ is identical. In fact, the total error of
reconstruction at the order $K$ is the sum of the error due to the truncation
$O(\rho^{-(K+2)}$) and the error due to the noise $O(\rho^{K}/N_s)$, with $N_s$ being the
number of equally distributed transmitters and $\rho>1$ the ratio between the
transmitter-to-target distance and the size of the target. So the reconstruction of high
order information is exponentially unstable, which is contrasted with the fact that at low
orders the error due to the noise can be reduced to zero by increasing the number of
transmitters \footnote{This is in agreement with the biological evidence that the weakly
  electric fish's skin is densely covered by the electrical receptors.}. Numerical
experiments in \cite{ammari_shape_2014} confirmed that with a large number of transmitters
the reconstruction of the polarization tensor (or the first order GPT) is very stable for
various settings of measurement system. On the other hand, it is known that the
reconstruction of GPTs of order greater than one is extremely unstable when the angle of
view is limited \cite{ammari_tracking_2013}.

The fundamental limit of using the polarization tensors in shape description is that they
do not contain high order information of the shape and can only describe (at a fixed
frequency) an equivalent ellipse \cite{ammari_polarization_2007}. However when probed with
a range of frequency, distinct shapes have different response which is the basis of the
multi-frequency approach proposed in \cite{ammari_shape_2014}. We propose here a
multi-scale construction of shape descriptors in the time domain that exploits the first
order filtered polarization tensor $\mN_{11}$ at different frequency band by varying the
pulse shape $h$. The new shape descriptors can describe complex shapes and contain both
the temporal and frequency signature of a shape. Furthermore, they are particularly
robust as we will see in Section \ref{sec:numer-exper} by numerical experiments.

\subsubsection{Multi-Scale invariants}
\label{sec:mscale-invariants}

Assume that $h$ is a band pass filter such that $\hh(0)=0$ (such function can be easily
obtained from derivatives of a Gaussian, for example), and 
% for example we can take $h$ the third derivative of a gaussian:
% \begin{align}
%   \label{eq:h_example}
%   h(t) = \Paren{e^{-\pi t^2}}^{(3)} = -4\pi^2 (2\pi t^3 - 3t) e^{-\pi t^2}
% \end{align}
let $h_j$ be the dyadic dilation of $h$ at the scale $j$:
\begin{align}
  \label{eq:def_hj}
  h_j(t) = 2^{j/2} h(2^j t) \ \text{ and }\ \hh_j(\omega) = 2^{-j/2} \hh(2^{-j}\omega).
\end{align}
We choose the normalization here so that the $L^2$ energy of the pulse remains
constant. Figure \ref{fig:Fh} shows an example of pulse shapes $h$ (smooth truncation of the third
derivative of a gaussian) and some scales in the frequency domain.

\def\figwidth{7cm}
\begin{figure}[htp]
  \centering 
  \subfigure{\includegraphics[width=\figwidth]{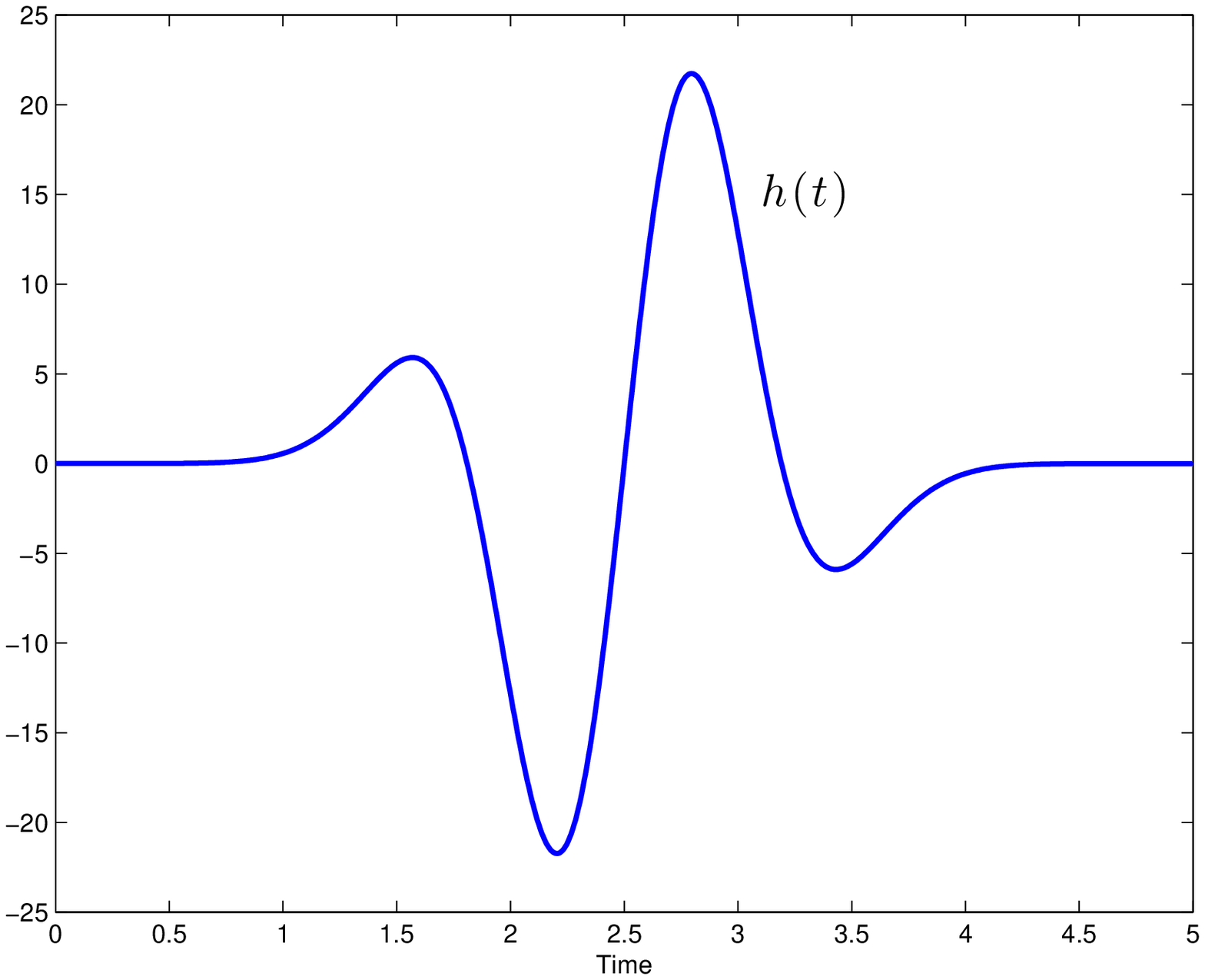}}
  \subfigure{\includegraphics[width=\figwidth]{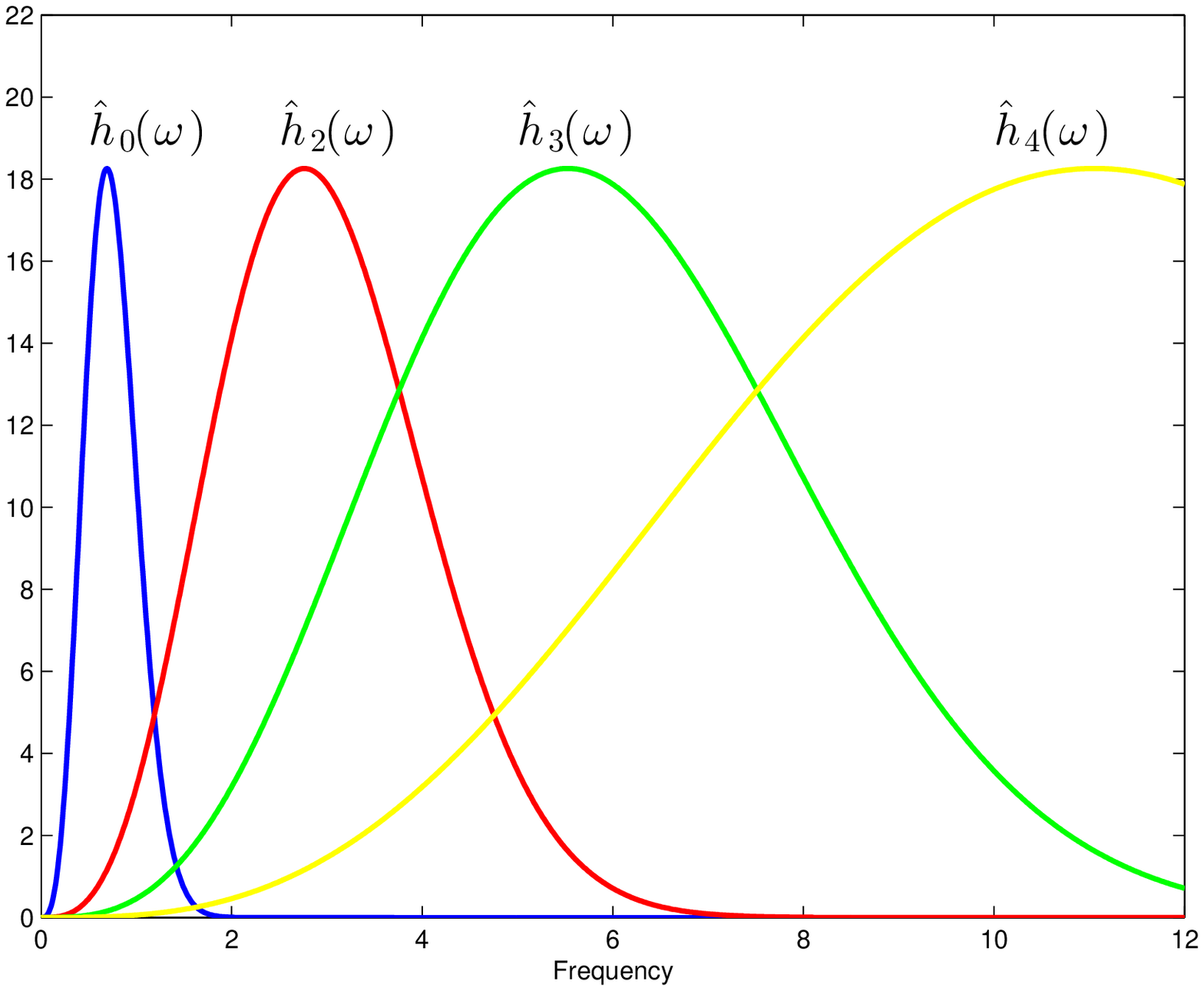}}
  \caption{Example of pulse shape $h$ and some Fourier transforms of $h_j$ (rescaled by $2^{j/2}$).}
  \label{fig:Fh}
\end{figure}

We use $h_j$ as the shape form in the source \eqref{eq:source_f} and acquire for this scale
the filtered GPTs $\mN^j_{11}(t)$, which is the inverse Fourier transform of
$\hh_j(\omega)\wM_{11}(\omega;D)$. Fix $T>0$ the duration of signal acquisition at the
scale $j=0$ and define the quantity
\begin{align}
  \label{eq:def_Ij}
  % I_j(t;D) = \norm{\mN^j_{11}(t;D)}_F \Paren{\frac {2^j} T \int_0^{T}
  % \norm{\mN^0_{11}(t;D)}_F\, dt}^{-1},
  I_j(t) = I_j(t;D) = \tau_{1}^j(t;D) \Paren{\frac {2^j} T \int_0^{T} \norm{\mN^0_{11}(t;D)}^2_F\,
    dt}^{-1/2},
\end{align}
where $\tau^j_1(t;D)$ is the largest singular value of the matrix $\mN^j_{11}(t;D)$ and
$\norm{\cdot}_F$ denotes the Frobenious norm of a matrix. We remark that the definition
\eqref{eq:def_Ij} is always meaningful since $\mN^0_{11}(t)$ is a smooth function of $t$
and is not identically zero.

It can be seen easily from Proposition \ref{prop:invar-prop-time} that $I_j$ is invariant, in
the sense that for arbitrary $z\in\R^d, s>0, R\in SO(\R^d)$,
\begin{align}
  \label{eq:Ij_invariant}
  I_j(t;z+sRD) = I_j(t;D), \ \forall t>0.
\end{align}

\subsubsection{Shape descriptor}
\label{sec:shape-descriptor}
In order to be processed numerically, $I_j(t)$ is sampled with the step
$\trt_j=2^{-j}T/N$ yielding $N$ equally distributed samples.  We set
\begin{align}
  \label{eq:def_Ijn}
  I_{j,n}(D) = I_j(n\trt_j;D) \simeq \frac{\tau^j_{1}(n\trt_j;D)}{\Paren{\frac{2^j}{N}
      \sum_{n=0}^{N-1} \norm{\mN^0_{11}(n\trt_0;D)}_F}^{1/2}},
\end{align}
and use the concatenation $\mI(D) := \set{I_{j,n}(D)}_{j,n}$ as the shape descriptor of
$D$.  In practice, the number of samples $N$ can be choosen so that the Shannon-Nyquist
sampling condition is fulfilled for the (essential) bandwidth of $h$.

\begin{figure}[htp]
  \centering 
  \subfigure[]{\includegraphics[width=\figwidth]{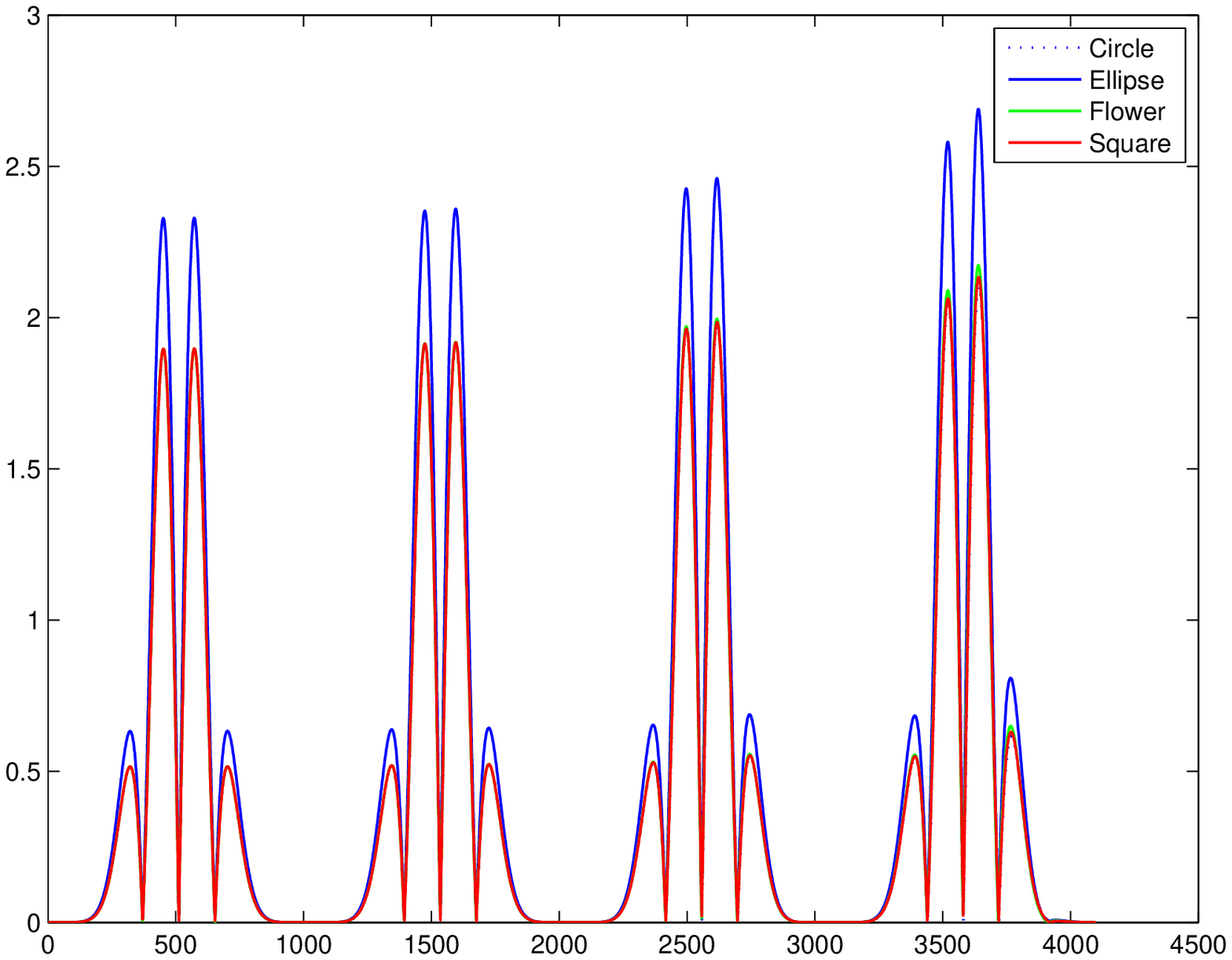}}
  \subfigure[]{\includegraphics[width=\figwidth]{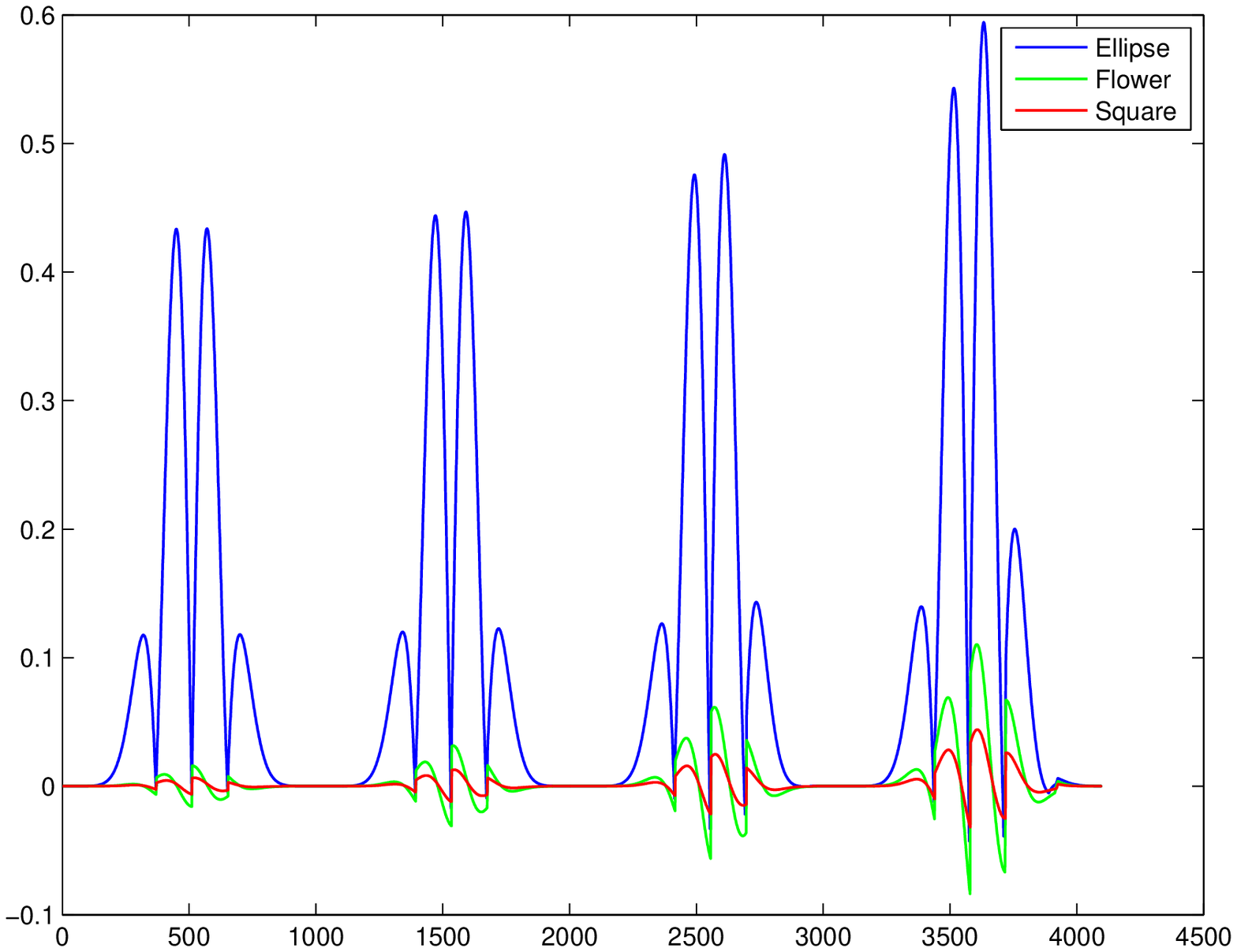}}
  \caption{(a): Shape descriptors $\mI(D)$ of 4 shapes: Circle, Ellipse, Flower and
    Square. (b): Difference of the shape descriptor between Circle and the other shapes.}
  \label{fig:Features}
\end{figure}

Figure.~\ref{fig:Features} shows the shape descriptors corresponding to four shapes,
computed with the pulse shape in Figure \ref{fig:Fh}.(a) at four consecutive scales
$j=-1,0,1,2$. Certain pulse shapes seem to be close to each other and one may ask whether they
allow to distinguish shapes with measurement noise. It turns out, as we shall see in the
next section, that the reconstruction of the filtered polarization tensors is well posed
and the multi-scale shape descriptors obtained from data are robust even at high noise
level. The range of scales $j\in\set{\jmin\ldots\jmax}$ which allows a good distinction between
shapes depends on the dictionary and also on the values of $\sigma,\vep$, and it can be
determined in practice by a numerical optimization procedure.

% \subsection{Classification}
% \label{sec:classification}

\section{Numerical Experiments}
\label{sec:numer-exper}

We present in this section some numerical results to illustrate the performance of shape
identification using pulse-type signals. The pulse shape in Figure \ref{fig:Fh} is used as
$h$. The acquisition system consists of $N_s=N_r=50$ positions of transmitters which are
distributed on a circle of radius $10.7$ and centered at the origin. Each source $x_s$ is
composed of two Dirac functions close to each other (within a distance of $0.1$)
satisfying the condition of neutrality \eqref{eq:neutrality}. We will consider only the
limited view case, \ie the transmitters cover uniformly the angle range $[0, \alpha]$ with
$\alpha<2\pi$, as illustrated by Figure~\ref{fig:acqsys}. Such a scenario is close to the
real world situation (the size of the electric fish's body is comparable to that of the
target) and is much harder to solve than the full view case, due to its severe
ill-posedness \cite{ammari_tracking_2013}.
% In fact, the full view case presents no real challenge for the proposed method.

% In order to to highlight the robustess of the proposed method and to be more realistic,

\def\figwidth{7cm}
\begin{figure}[htp]
  \centering 
  \subfigure[$\alpha=\pi/8$]{\includegraphics[width=\figwidth]{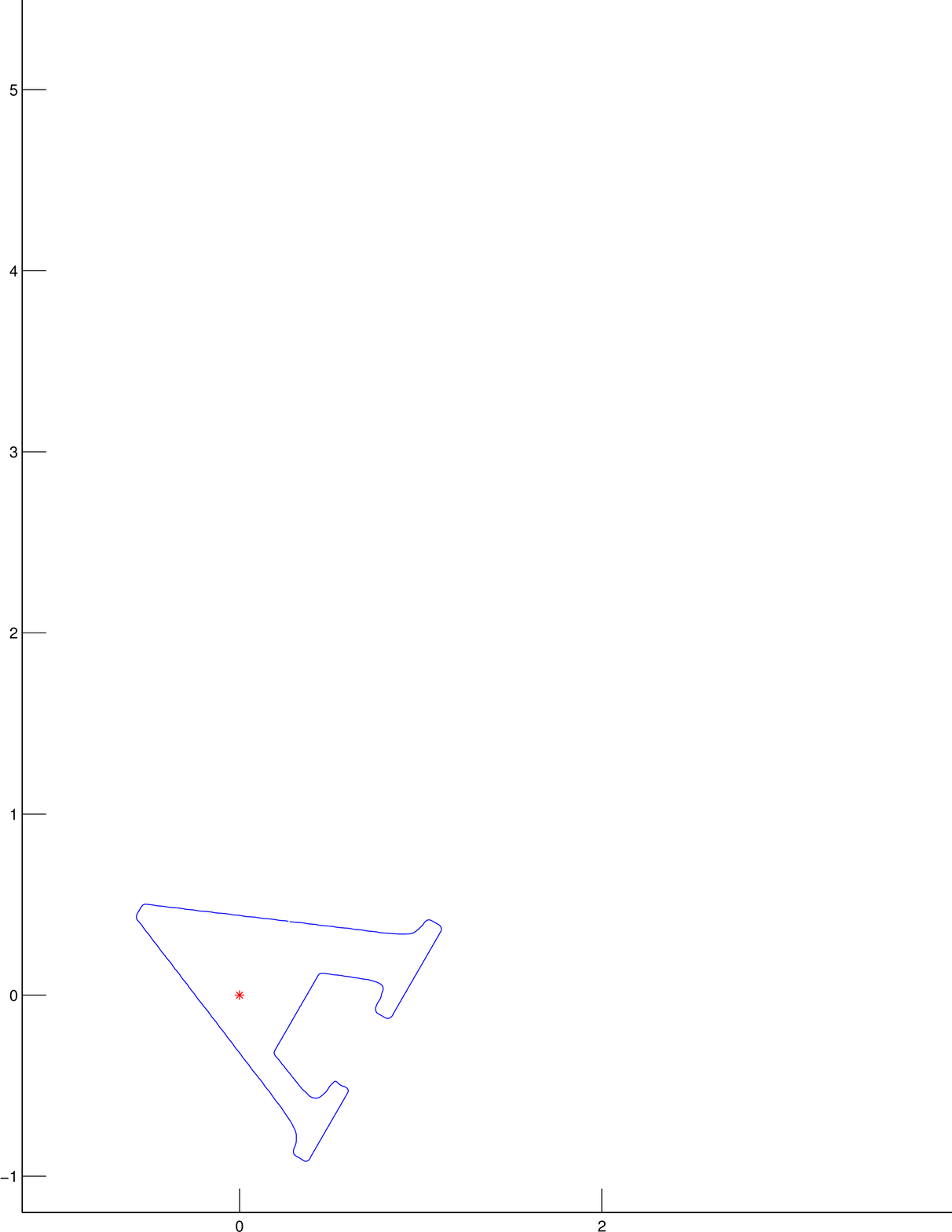}}
  \subfigure[$\alpha=\pi/32$]{\includegraphics[width=\figwidth]{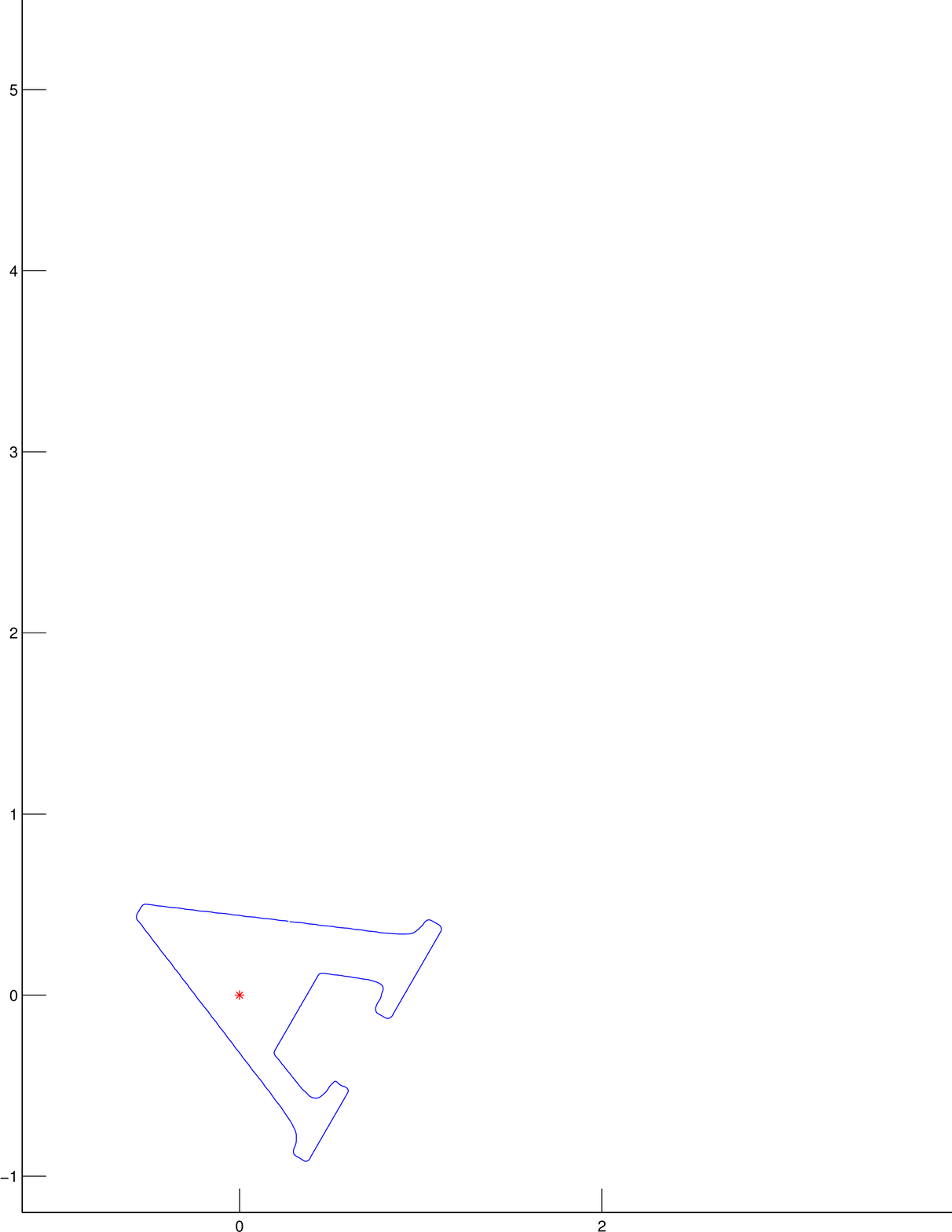}}
  \caption{Examples of acquisition system of limited angle of view using $50$ transmitters
    equally distributed on an arc.  The center of the target is marked by the red '*'.}
  \label{fig:acqsys}
\end{figure}

The overall procedure of the numerical simulation is resumed as follows.
% similar to the one described in \cite{ammari_target_2014}. 

\paragraph{Dictionary}
\label{sec:dictionary}
Our dictionary of standard shapes consists of eight elements $\set{B_n}_{n=1\dots 8}$ as shown
in Figure~\ref{fig:dicoshapes}. % It is used only for a proof of concept demonstration.
All shapes share the same conductivity $\sigma=10$ and the same permittivity $\ve=1$,
except for the second ellipse which has the electromagnetic parameters $\sigma=5$ and $\ve=2$. The conductivity
and the permittivity of the background are $\sigma_0=1, \ve_0=0$.  To construct the shape
descriptors $\set{\mcl I(B_n)}_n$ of the dictionary, we set ${h_j}$ as in \eqref{eq:def_hj}
for four scales $j=-1,0,1,2$ and compute $\mN^j_{11}(t;B_n)$ in the frequency
domain via \eqref{eq:wMmnh_freq} and \eqref{eq:def_Mmn}, then followed by inverse Fourier
transform to go back to the time domain. 
\graphicspath{{figures/dico/}}
\def\figwidth{1.75cm}

\begin{figure}[htp]
  \centering
  \subfigure[\tiny{Circle}]{\includegraphics[width=\figwidth]{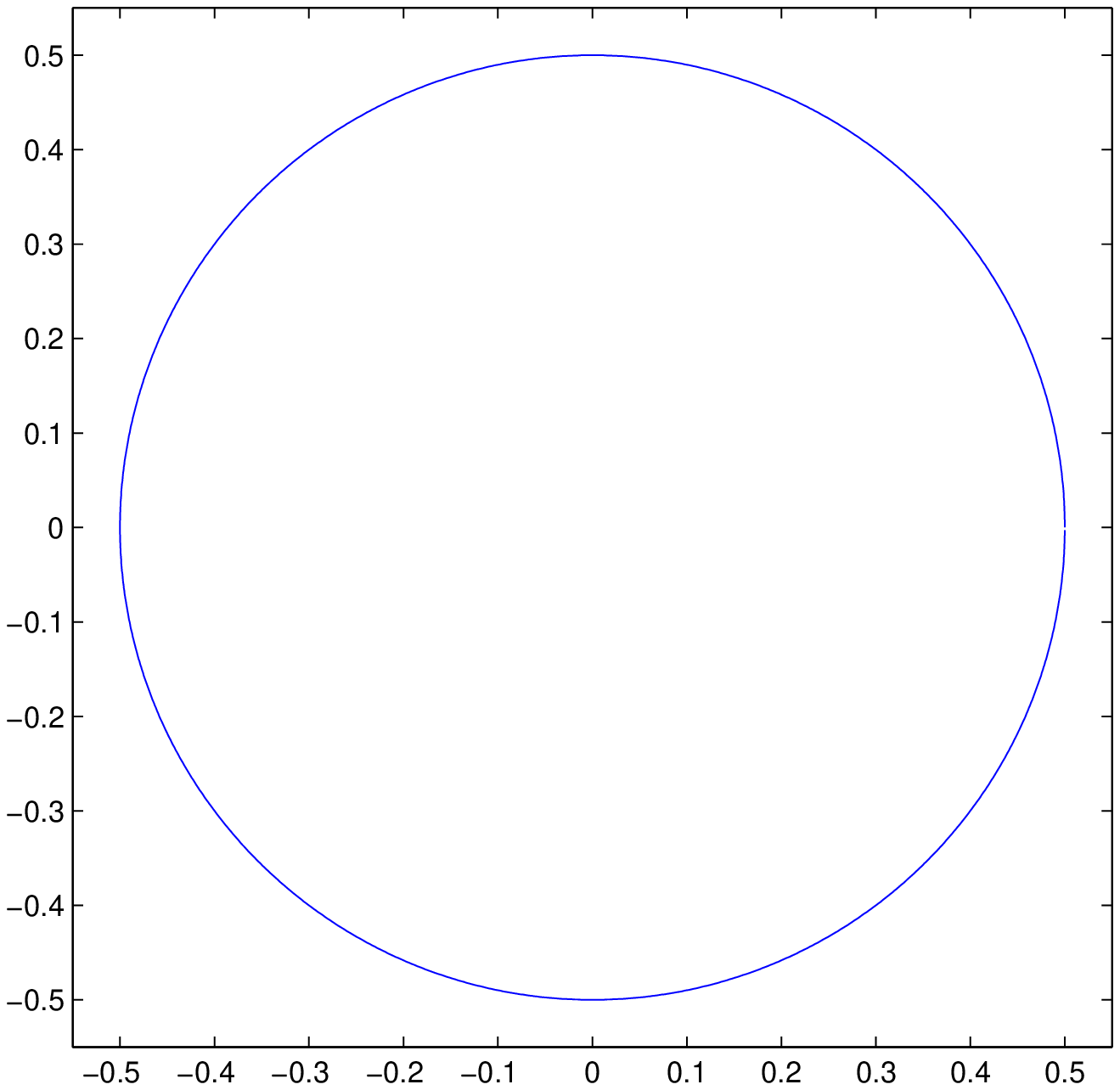}}
  \subfigure[\tiny{Ellipse}]{\includegraphics[width=\figwidth]{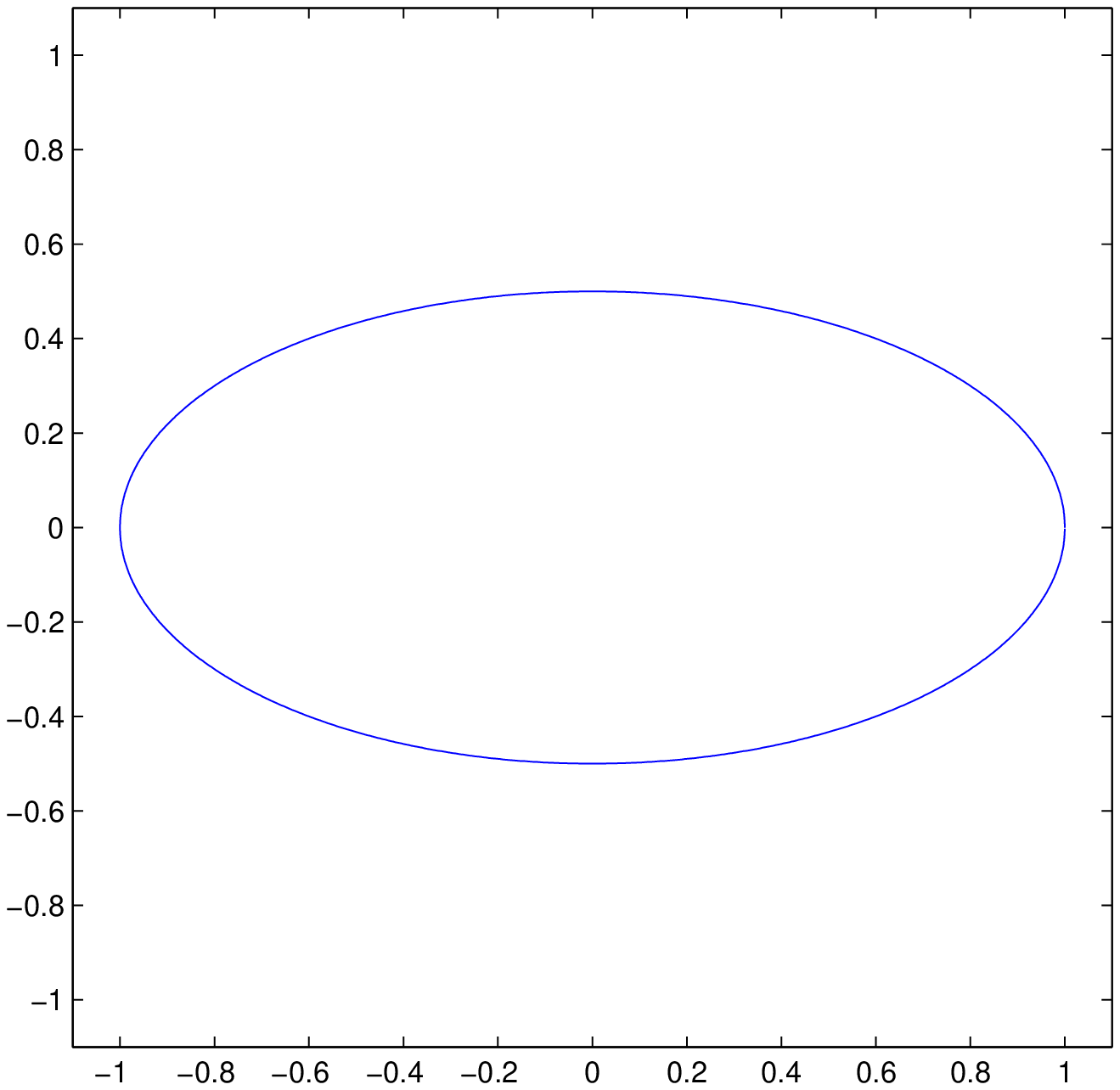}}
  \subfigure[\tiny{Flower}]{\includegraphics[width=\figwidth]{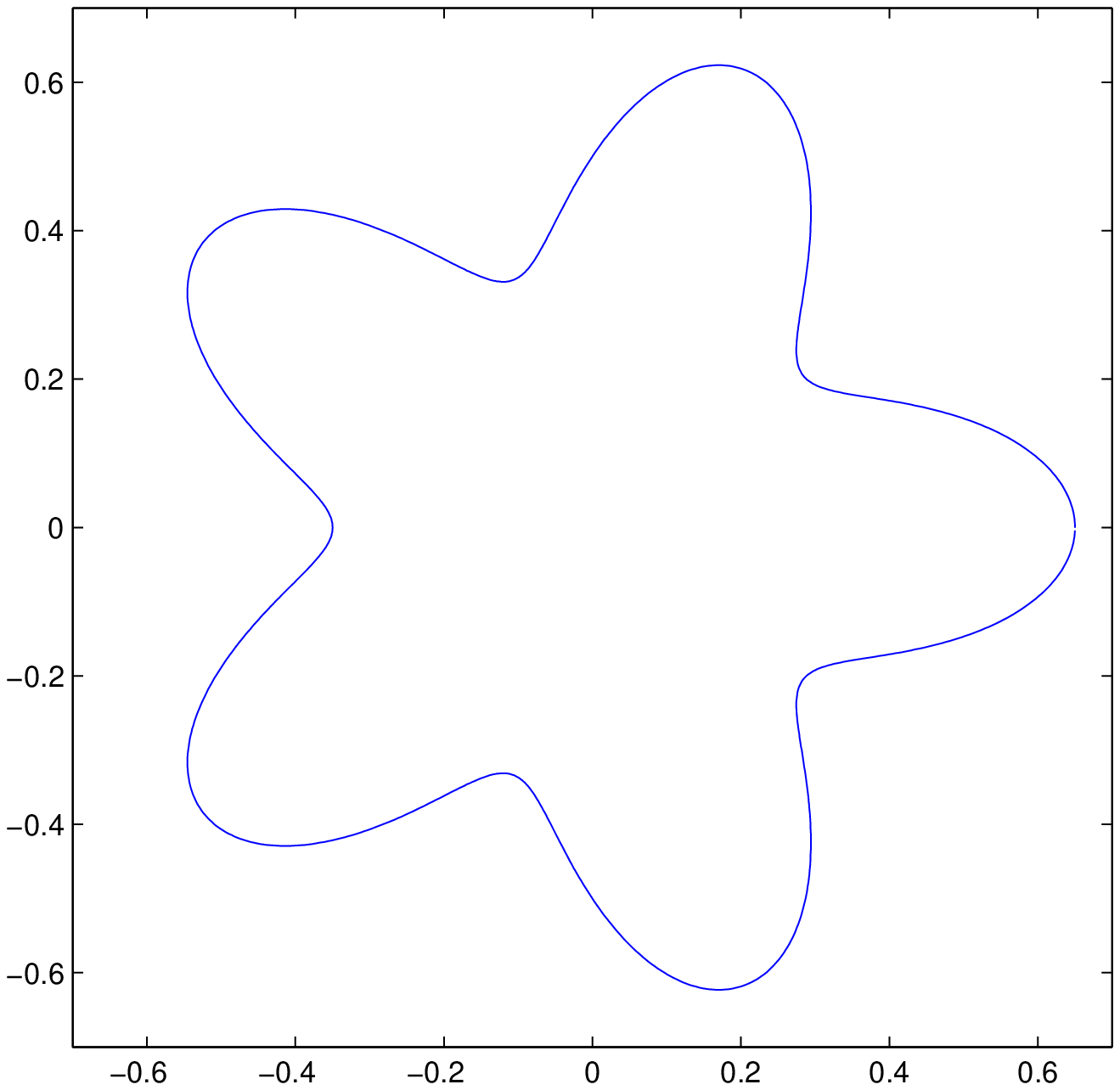}}
  % \subfigure[\tiny{Triangle}]{\includegraphics[width=\figwidth]{}}
  \subfigure[\tiny{Square}]{\includegraphics[width=\figwidth]{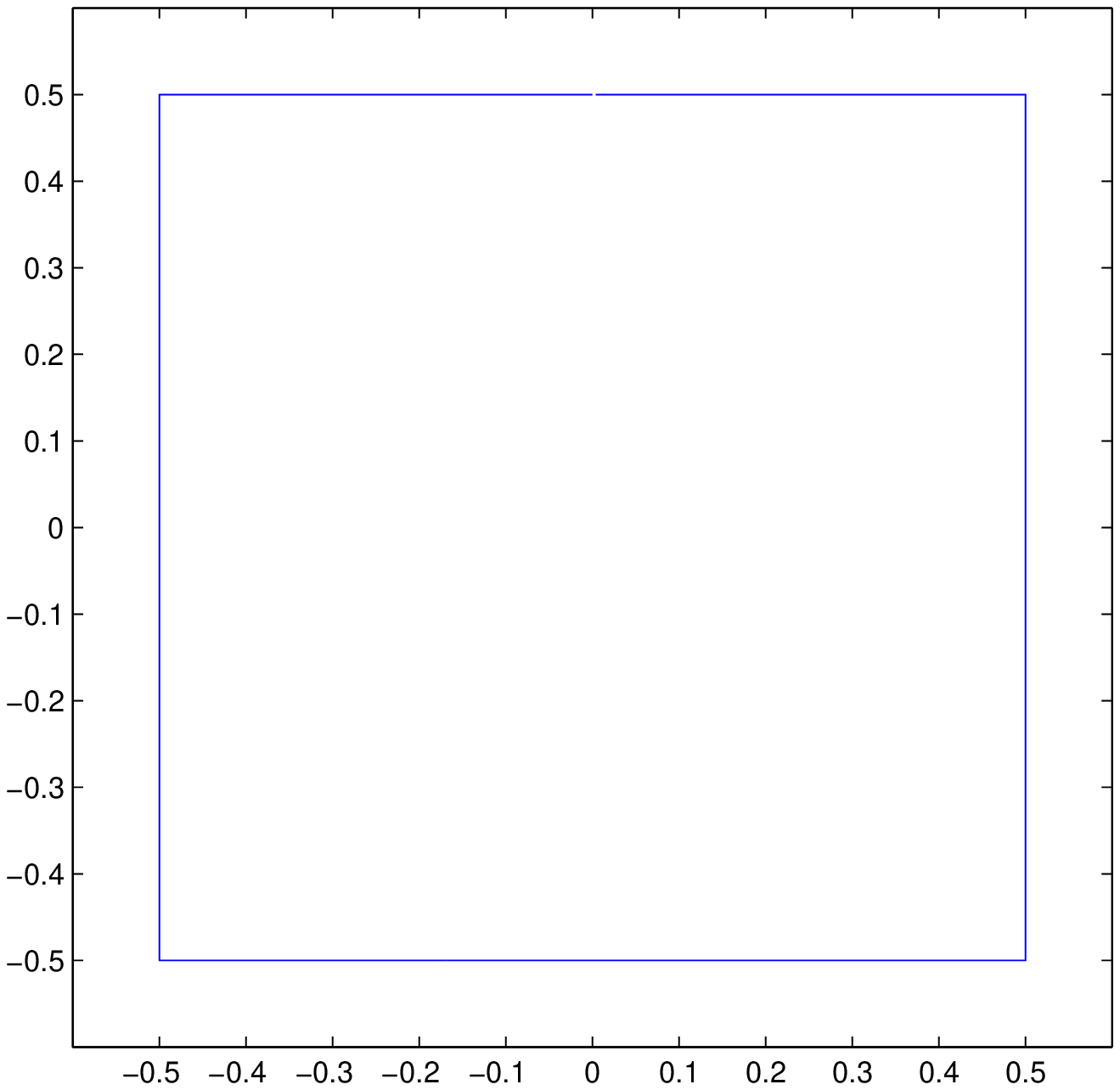}}
  \subfigure[\tiny{Rectangle}]{\includegraphics[width=\figwidth]{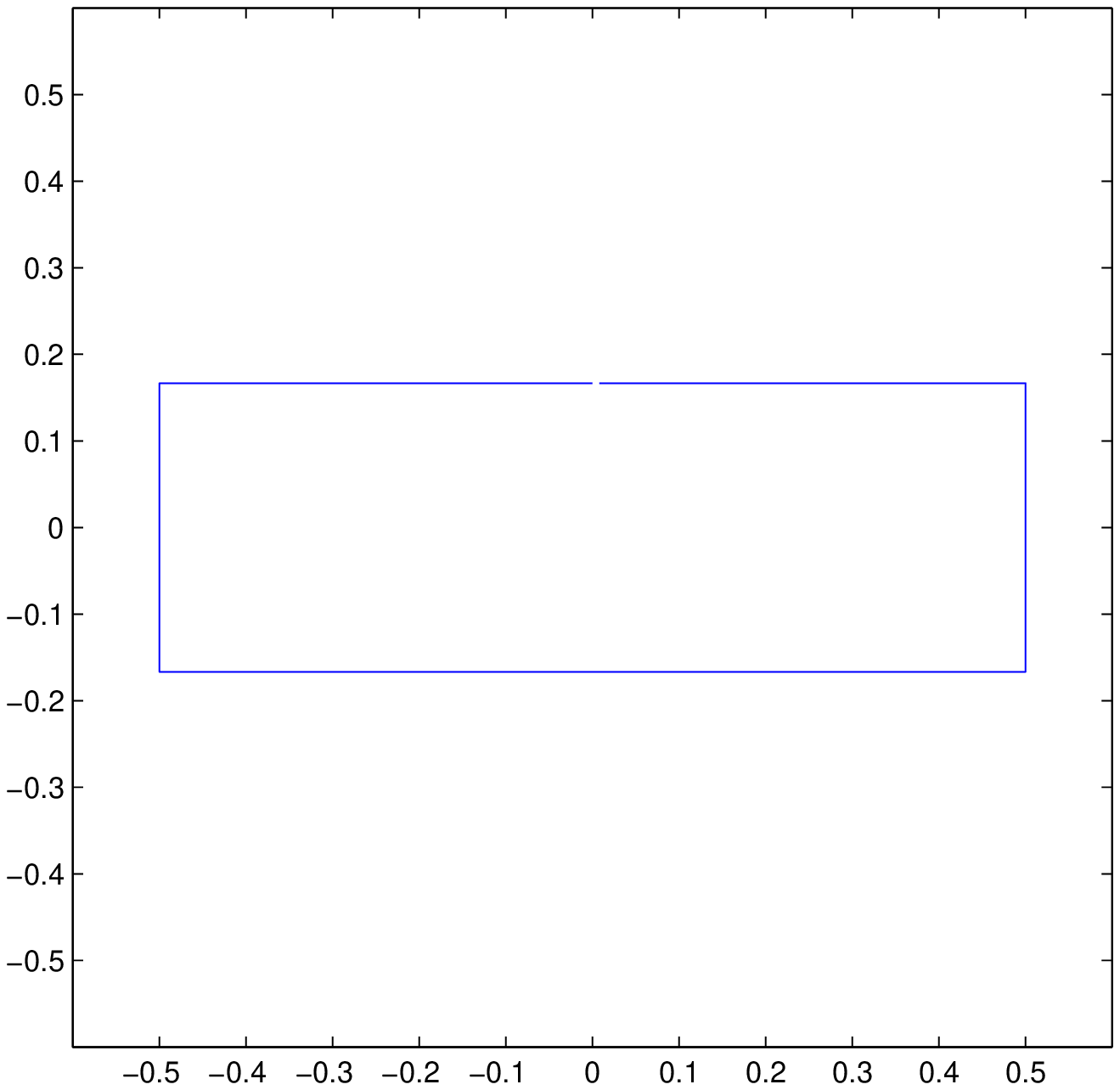}}
  \subfigure[\tiny{Letter A}]{\includegraphics[width=\figwidth]{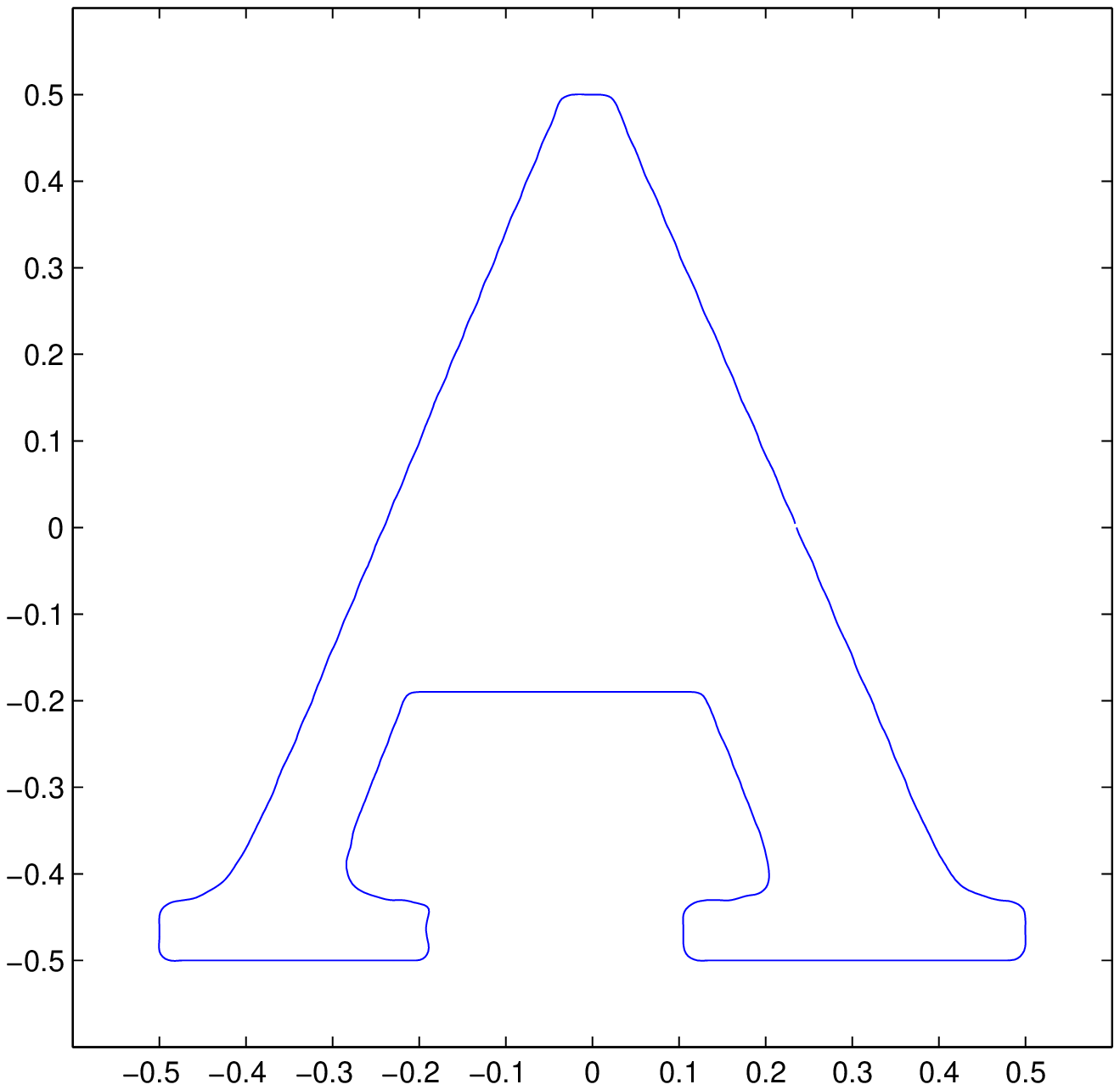}}
  \subfigure[\tiny{Letter L}]{\includegraphics[width=\figwidth]{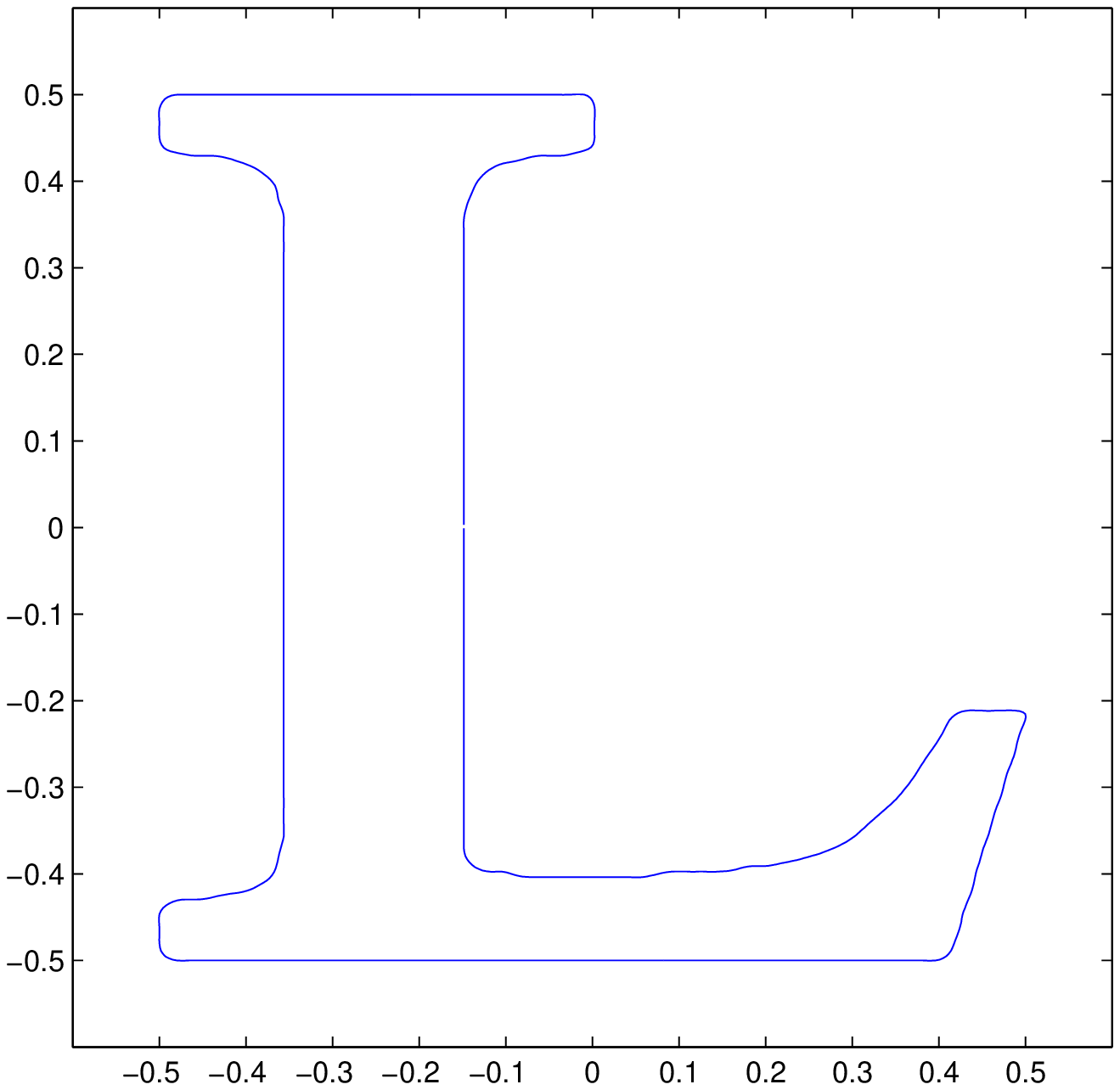}}
  \subfigure[\tiny{Ellipse 2}]{\includegraphics[width=\figwidth]{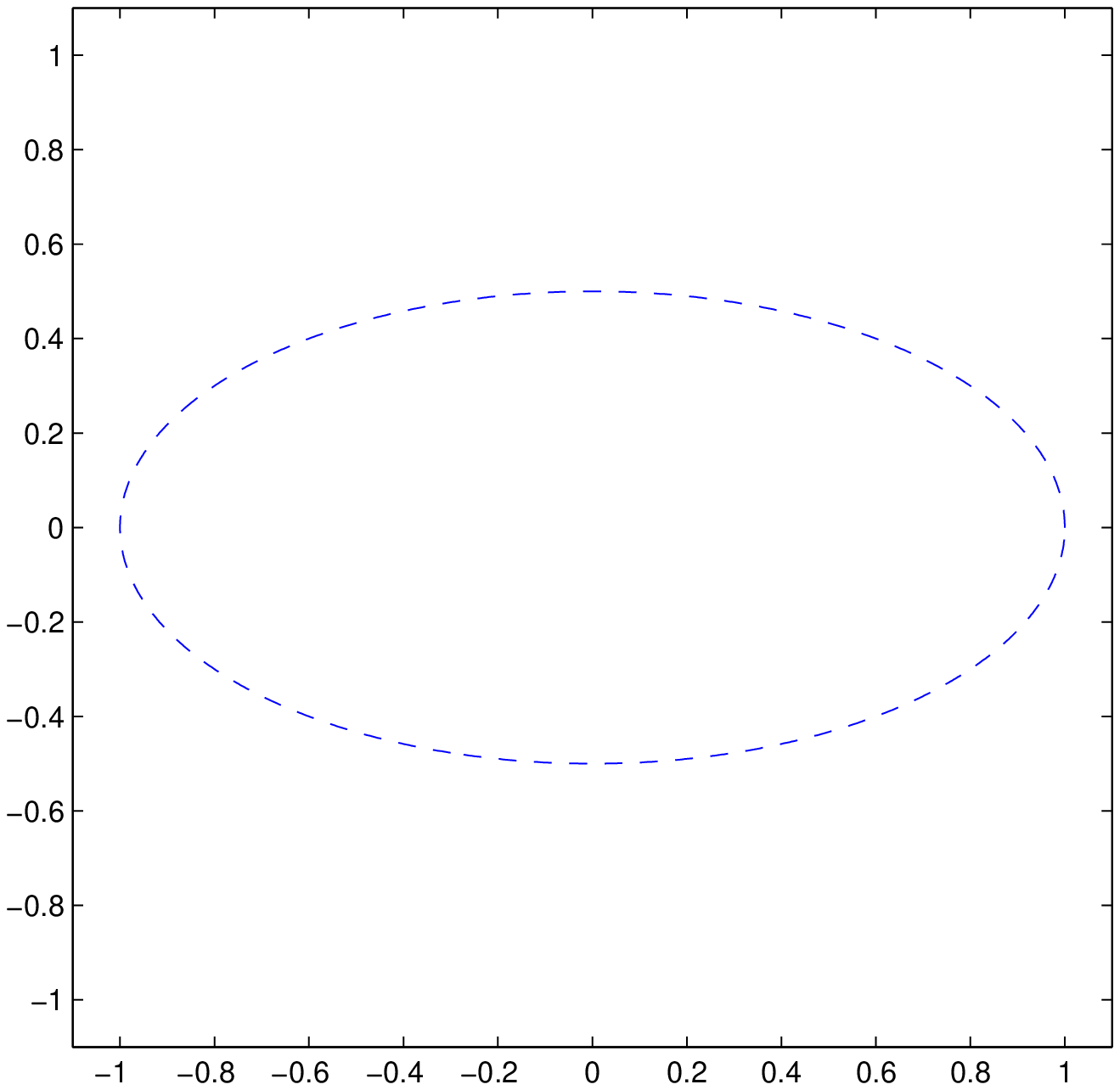}}
  \caption{A small dictionary of shapes. All the shapes have the same conductivity $\sigma=10$
    and the same permittivity $\vep=1$ except the ellipse in dot line which has
    $\sigma=5$ and  $\vep=2$.}
  \label{fig:dicoshapes}
\end{figure}

\paragraph{Data simulation}
\label{sec:simulation-data}
The same pulse shapes $h_j$ are used for the simulation of data.  The target $D$ is one
of the dictionary elements after applying the rotation $\theta=\pi/3$, the dilation $s=1.5$
and the translation $z=[0.1,0.1]^\top$ with $\top$ being the transpose. At the scale $j$, the MSR matrix denoted by
$\mV_j(t)$ is simulated on the time interval $[0,2^{-j}T]$ with $T=5$ using $N=2^9$
uniform samples, by evaluating the integral representation \eqref{eq:repr_solution}. More
specifically, we first obtain $\vphi(t)$ by solving \eqref{eq:repr_vphi_func_fulfill} (with
$h_j$ as the pulse shape) via the numerical scheme of Appendix \ref{sec:numer-solut-forw}. Then we
apply the single layer potential $\Sgl D$ on $\vphi(t)$. Further, each entry of the simulated matrix is
contaminated by some white noise following the normal distribution $\normallaw{\snoise^2}$
with
\begin{align*}
  \snoise = \frac {\snoiseper} {\sqrt{N_sN_r}} \Paren{\frac 1 {2^{-j}T}
    \int_0^{2^{-j}T} \norm{\mV_j(t)}^2_F \, dt}^{-1/2}
\end{align*}
with $\snoiseper$ being the percentage of the noise. % The discrete noisy data stream of scale
% $j$ are obtained by sampling the time interval $[0,\beta^{-j}T]$ using.
Figure \ref{fig:MSR_noisy} shows the time profile of the entry $V_{11}$ in 
the MSR matrix for an
elliptical target simulated using the pulse shape $h_0$.

\graphicspath{{figures/}}
\def\figwidth{7cm}
\begin{figure}[htp]
  \centering 
  \subfigure[$\snoiseper=1$]{\includegraphics[width=\figwidth]{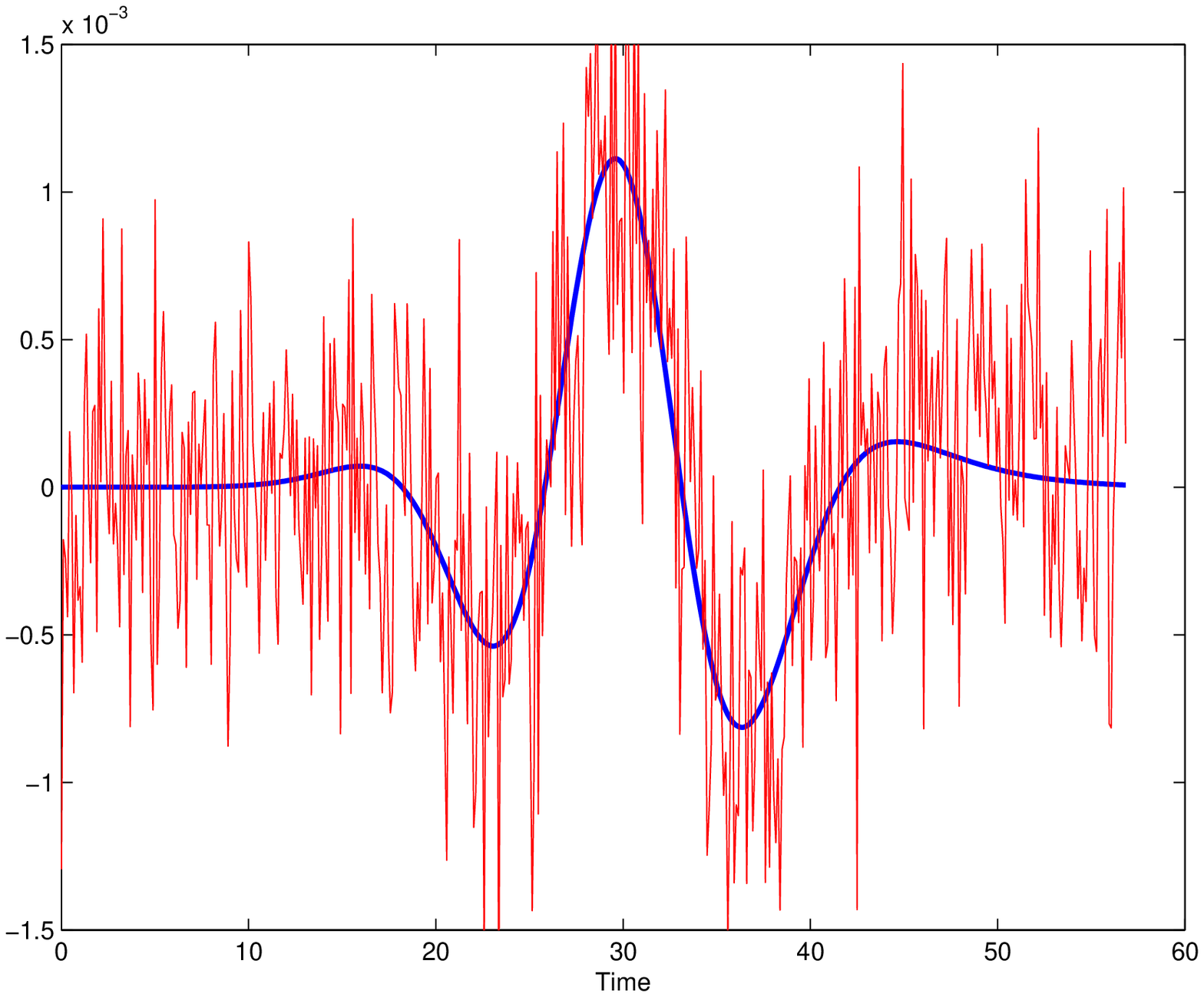}}
  \subfigure[$\snoiseper=2$]{\includegraphics[width=\figwidth]{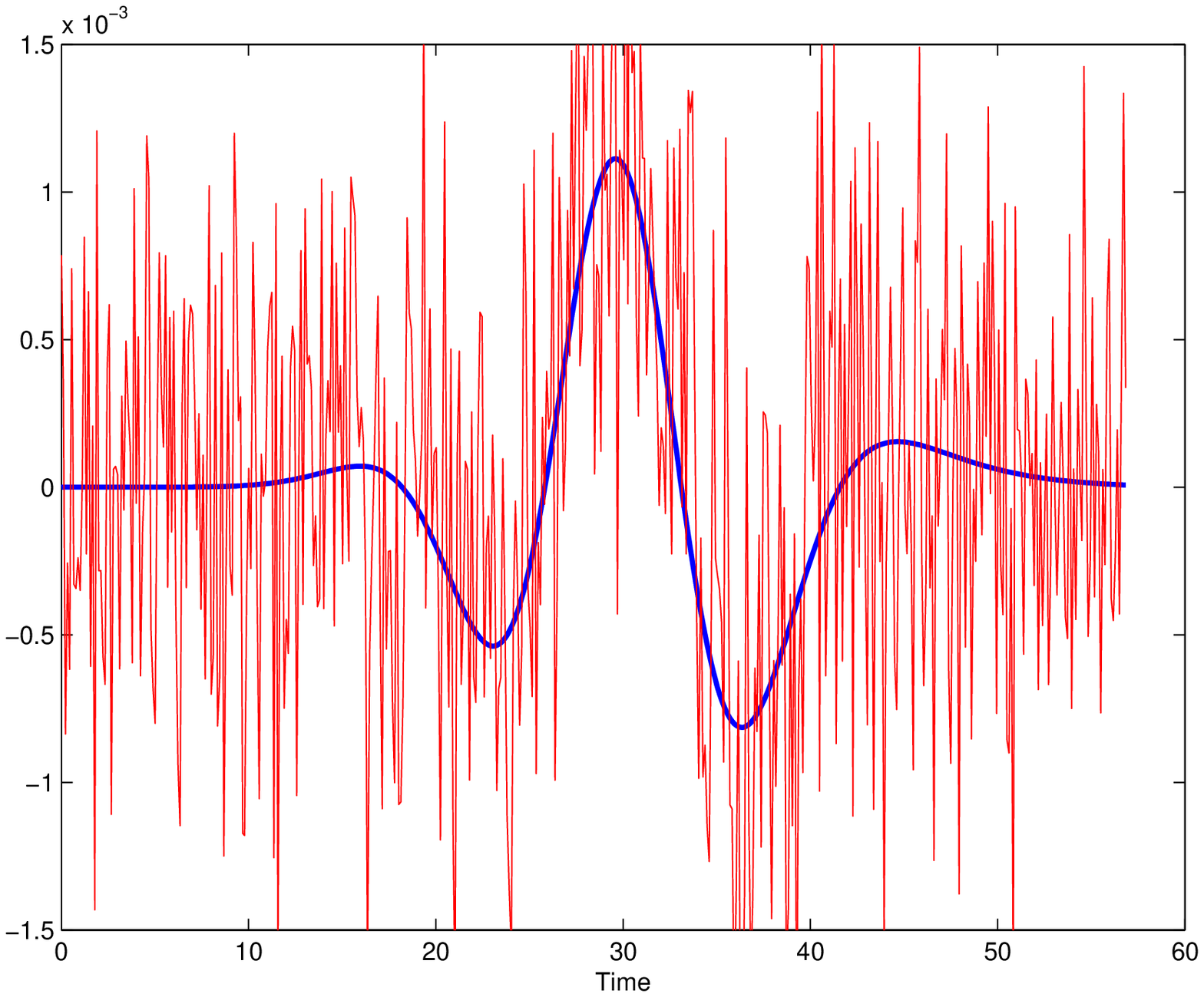}}
  \caption{Example of the MSR data $V_{11}(t)$ corresponding to an ellipse. In blue: without noise. In red:
    with (a) $100\%$ and (b) $200\%$ of noise.}
  \label{fig:MSR_noisy}
\end{figure}

\paragraph{Shape identification}
\label{sec:shape-recognition}
For each scale $j$ we reconstruct the filtered polarization tensor $\mN^j_{11}(t)$ from
the simulated data by inverting the linear system \eqref{eq:Vsr_linsys_time} in the time
domain (the operator $\mL$ is constructed as in~\eqref{eq:GPT_expansion} with the
truncation order $K=1$). Furthermore, the symmetry of $\mN^j_{11}(t)$ is incorporated as a
constraint in the inversion in order to enhance the robustness. The shape descriptors are then
computed via \eqref{eq:def_Ijn}. Finally the euclidean norm $$\ve(D,B_n) = \norm{\mcl I(D)
  - \mcl I(B_n)}$$
% $\set{I_{j,n}(D)-I_{j,n}(B_n)}_{j,n}$
is evaluated for the whole dictionary and the shape is identified as the one yielding the
smallest value.

\subsection{Results of identification}
\label{sec:results}

For each shape of the dictionary, we simulate data and identify it using the procedure
described above. Figure \ref{fig:errorbar1} shows the results of shape identification for
a limited view configuration with the aperture $\alpha=\pi/16$ at two noise levels
$\snoiseper=100\%$ and $200\%$. The error $\ve(D,B_n)$ is represented here by error bars,
where the $m$-th bar in the $n$-th group corresponds to $\ve(D,B_m)$ of the identification
experiment with the shape $D$ generated by $B_n$ (labeled by its name). The shortest bar
in each group is the identified shape and is marked in green, while the true shape is
marked in red in case that the identification fails. Each error bar is the average of the
same experiment with 100 independent realization of white noise.  It can be seen that the
identification succeeded for all shapes with $100\%$ of noise, and it failed only for the
circle with $200\%$ of noise.

\begin{figure}[htp]
  \centering
  \subfigure[$\snoiseper=1$]{\includegraphics[width=\figwidth]{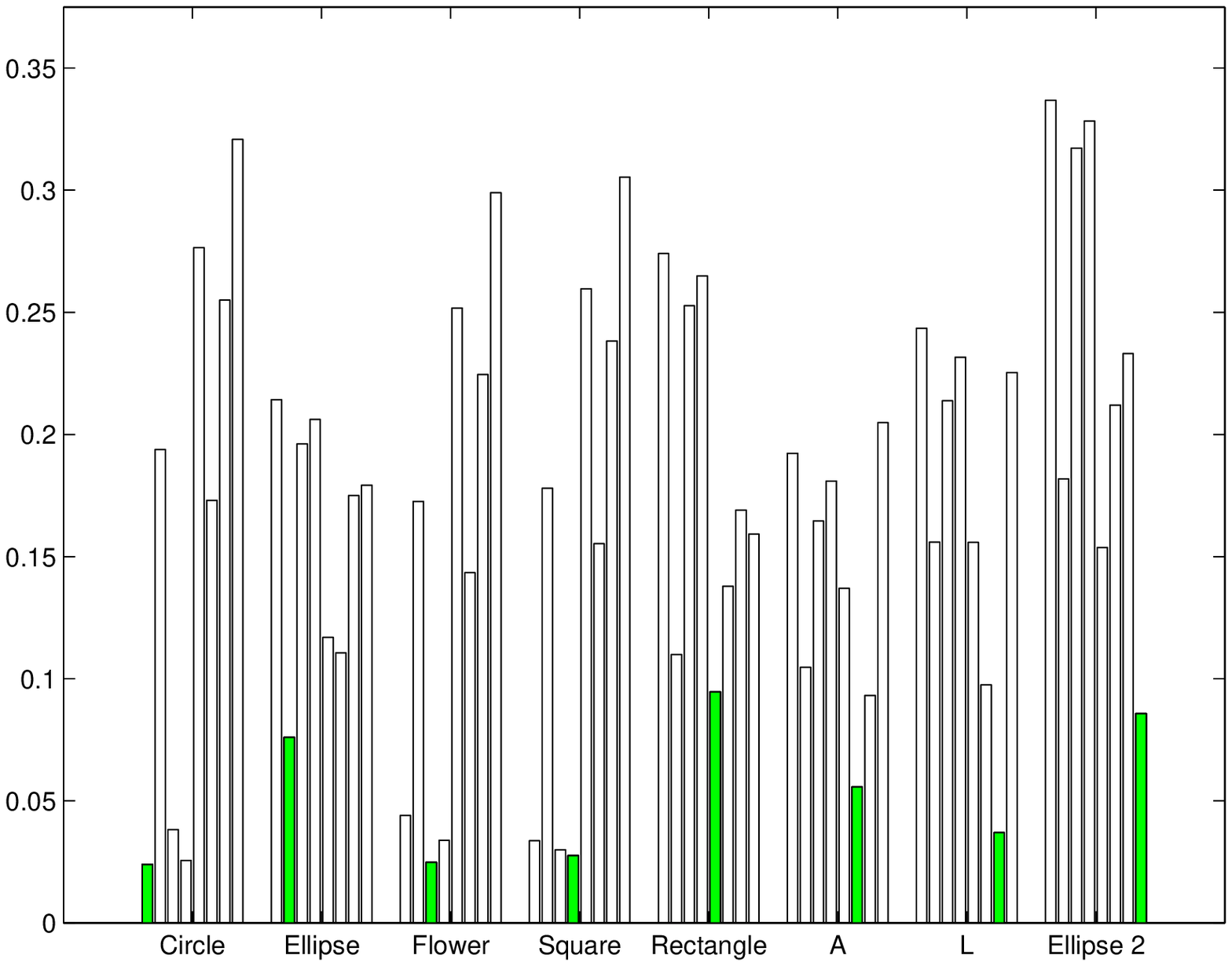}}
  \subfigure[$\snoiseper=2$]{\includegraphics[width=\figwidth]{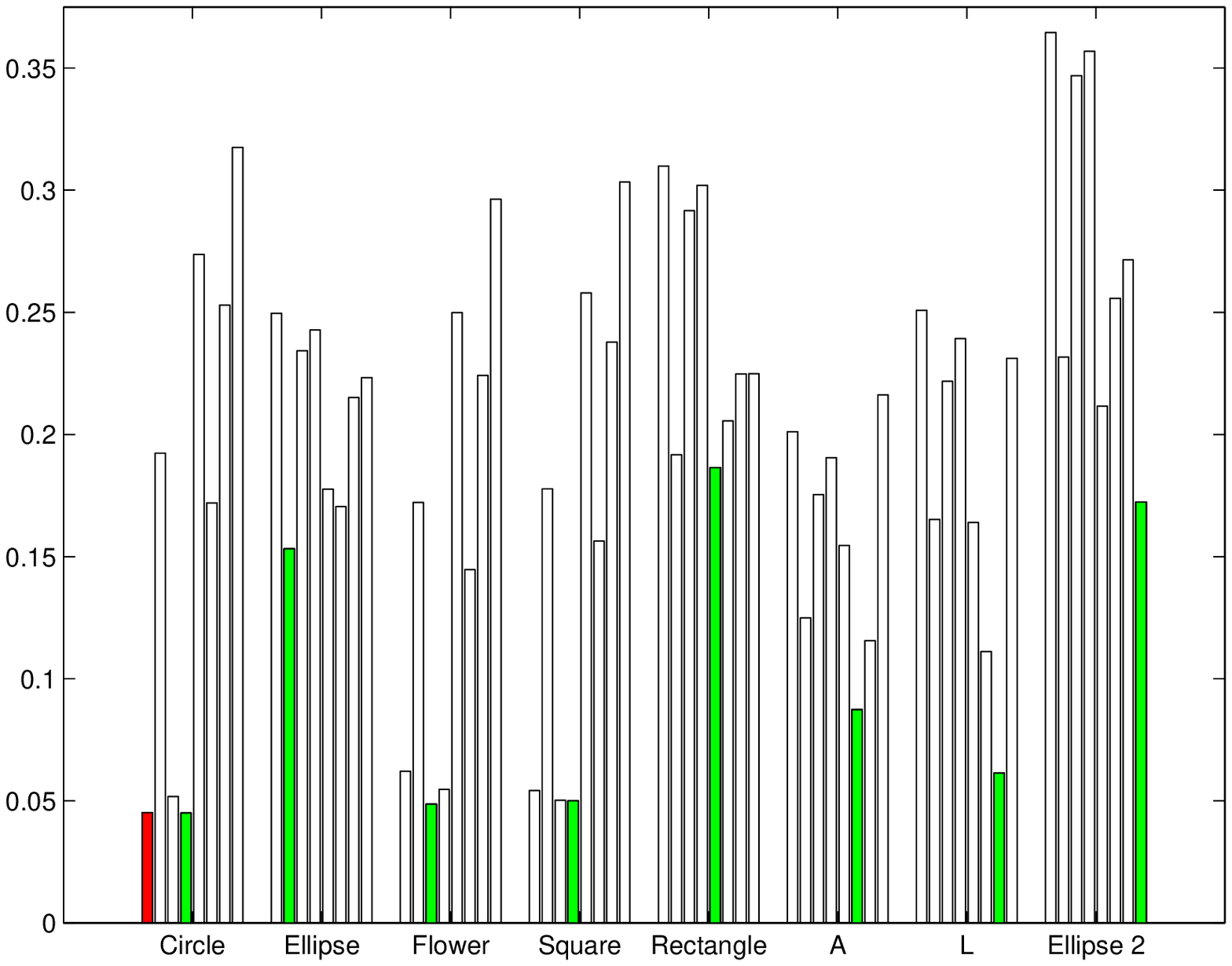}}
  \caption{Results of identification at two noise levels using a limited view configuration
    with the aperture $\alpha=\pi/16$.}
  \label{fig:errorbar1}  
\end{figure}

% \paragraph{Limited angle of view}
% \label{sec:limited-angle-view}

\paragraph{Robustness}
\label{sec:robustn-against-nois}

Figure \ref{fig:robustness_noise} illustrates the robustness of the proposed method in a
noisy environment for two settings of limited view with the aperture $\alpha=\pi/8$ and
$\alpha=\pi/32$. Each curve represents the probability of successful identification as a
function of $\snoiseper$ which ranges from $25\%$ to $800\%$, obtained by repeating at
every noise level the experiment 1000 times with independent realizations of white
noise. The horizontal line at 0.125 marks the threshold that the proposed matching method
performs better than a random guess. It can be seen that the angle of view can affect the
performance, and in both cases all shapes are correctly identified with $100\%$ of
noise. It is worth noticing that certain shapes, like the letters and the flower, exhibits
an extraordinary robustness.

\begin{figure}[htp]
  \centering
  \subfigure[$\alpha=\pi/8$]{\includegraphics[width=\figwidth]{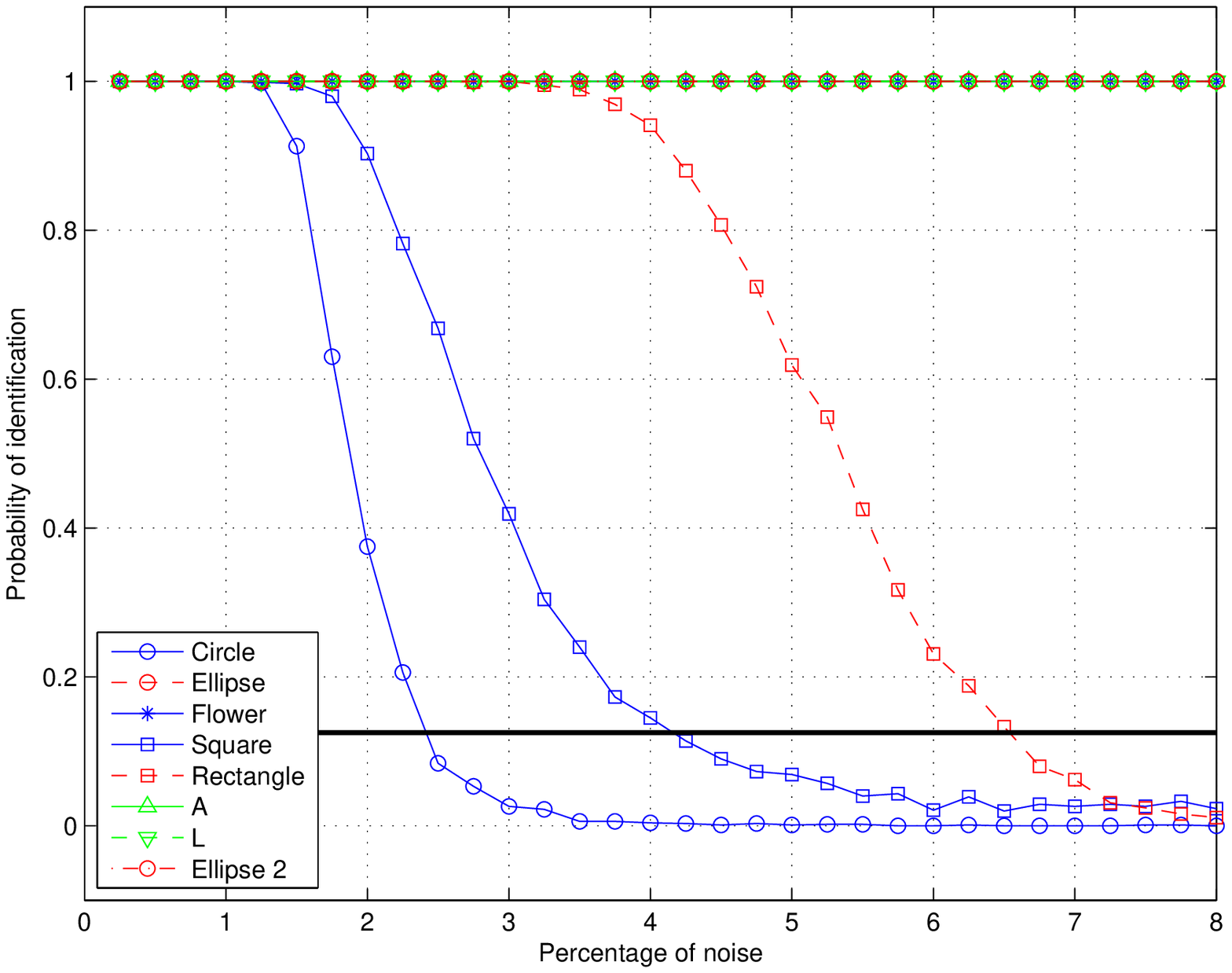}}
  \subfigure[$\alpha=\pi/32$]{\includegraphics[width=\figwidth]{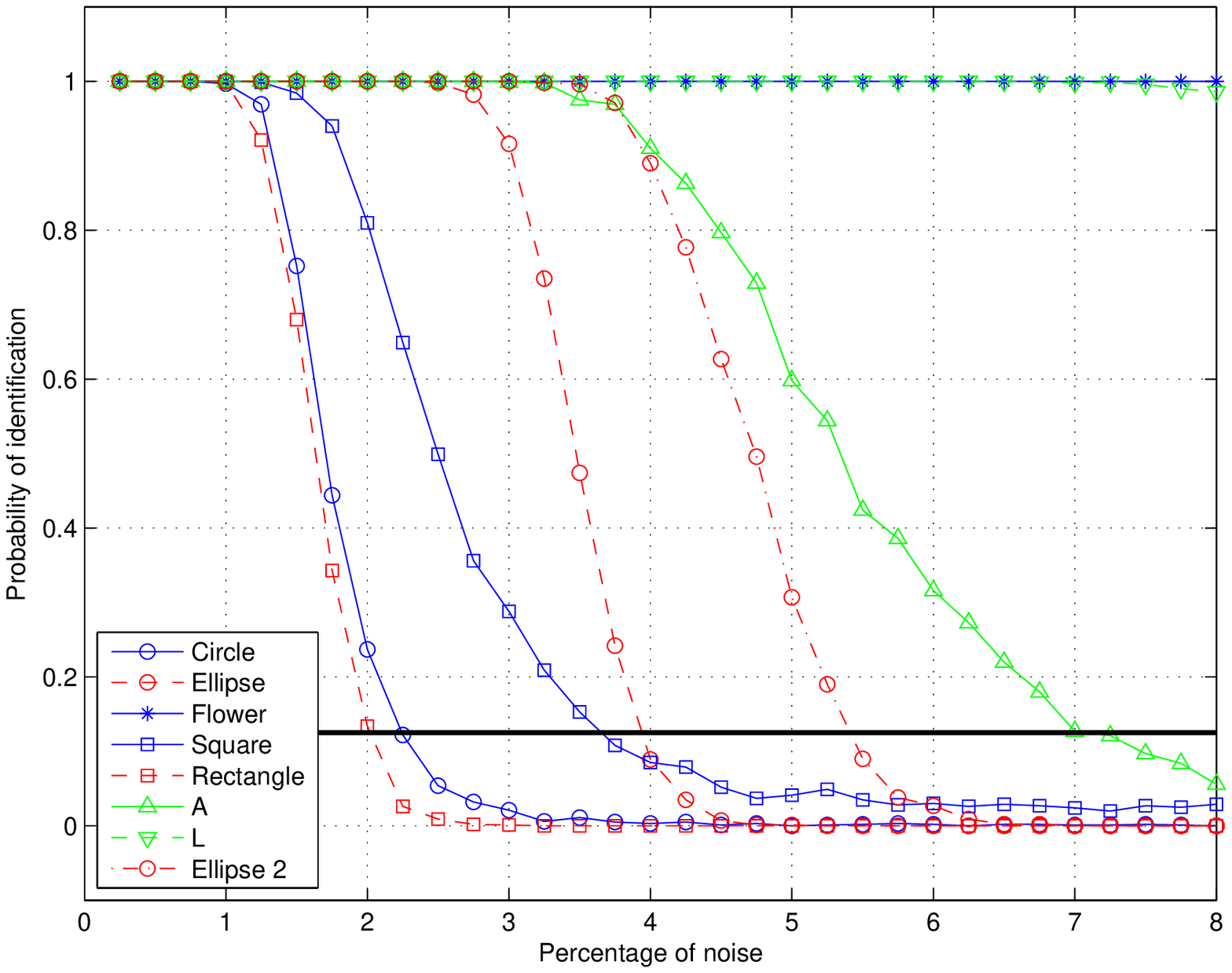}}
  \caption{Results of identification at various noise levels with the shape descriptors of
    4 scales and the aperture (a): $\alpha=\pi/8$ and (b): $\alpha=\pi/32$.}
  \label{fig:robustness_noise}  
\end{figure}

\paragraph{Number of scales}
\label{sec:number-scales}
The number of scales has an important impact on the robustness of the identification. A
large number of scales contains more information hence gives a better performance of
identification. On the contrary, the overall performance is reduced when the number of
scales is insufficient. This can be seen from Figure \ref{fig:robustness_scale} where the
same experiment in Figure \ref{fig:robustness_noise} is carried out with the scales $j=-1$
and $j=-1,0$ respectively.

\begin{figure}[htp]
  \centering
  \subfigure[$1$ scale]{\includegraphics[width=\figwidth]{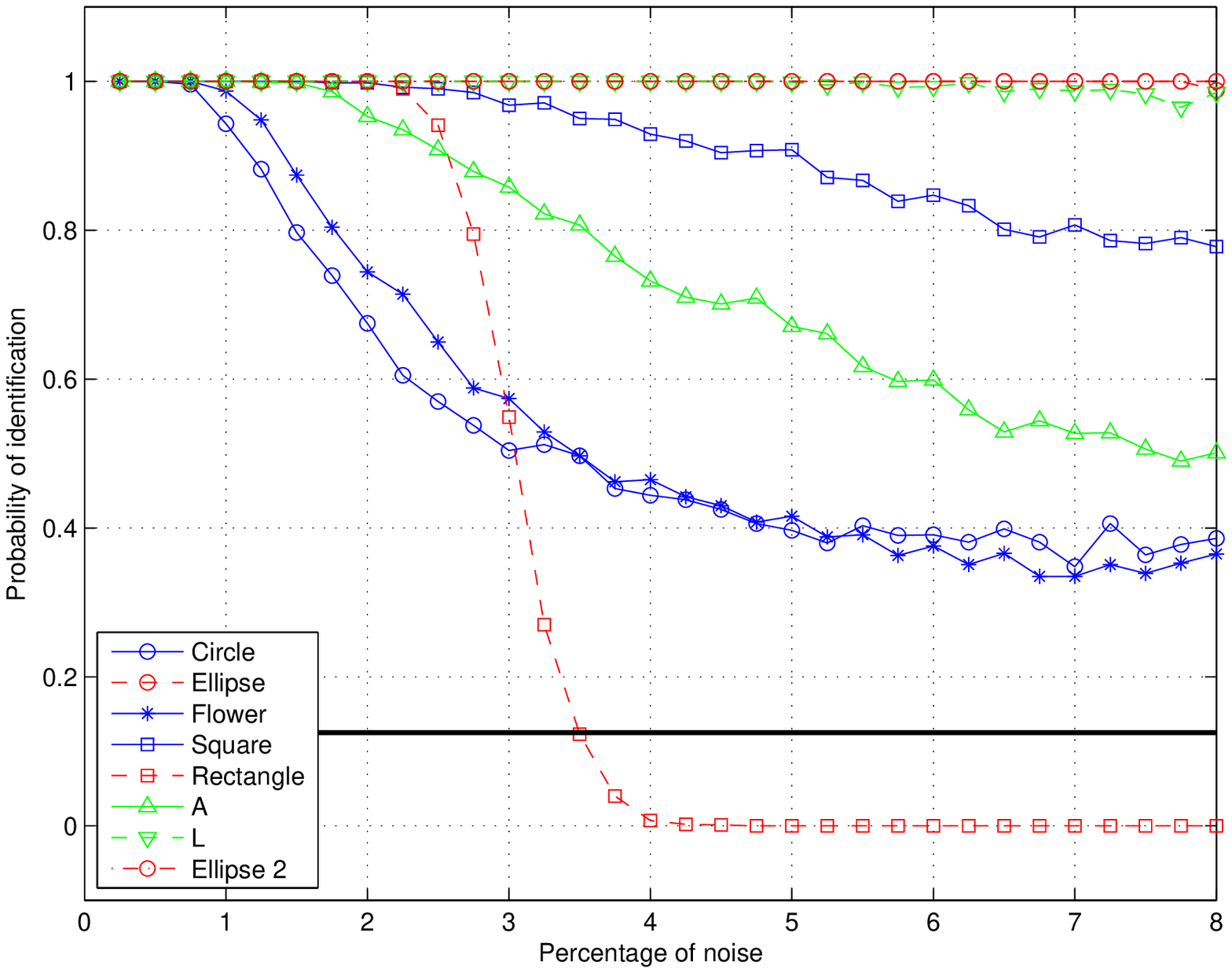}}
  \subfigure[$2$ scales]{\includegraphics[width=\figwidth]{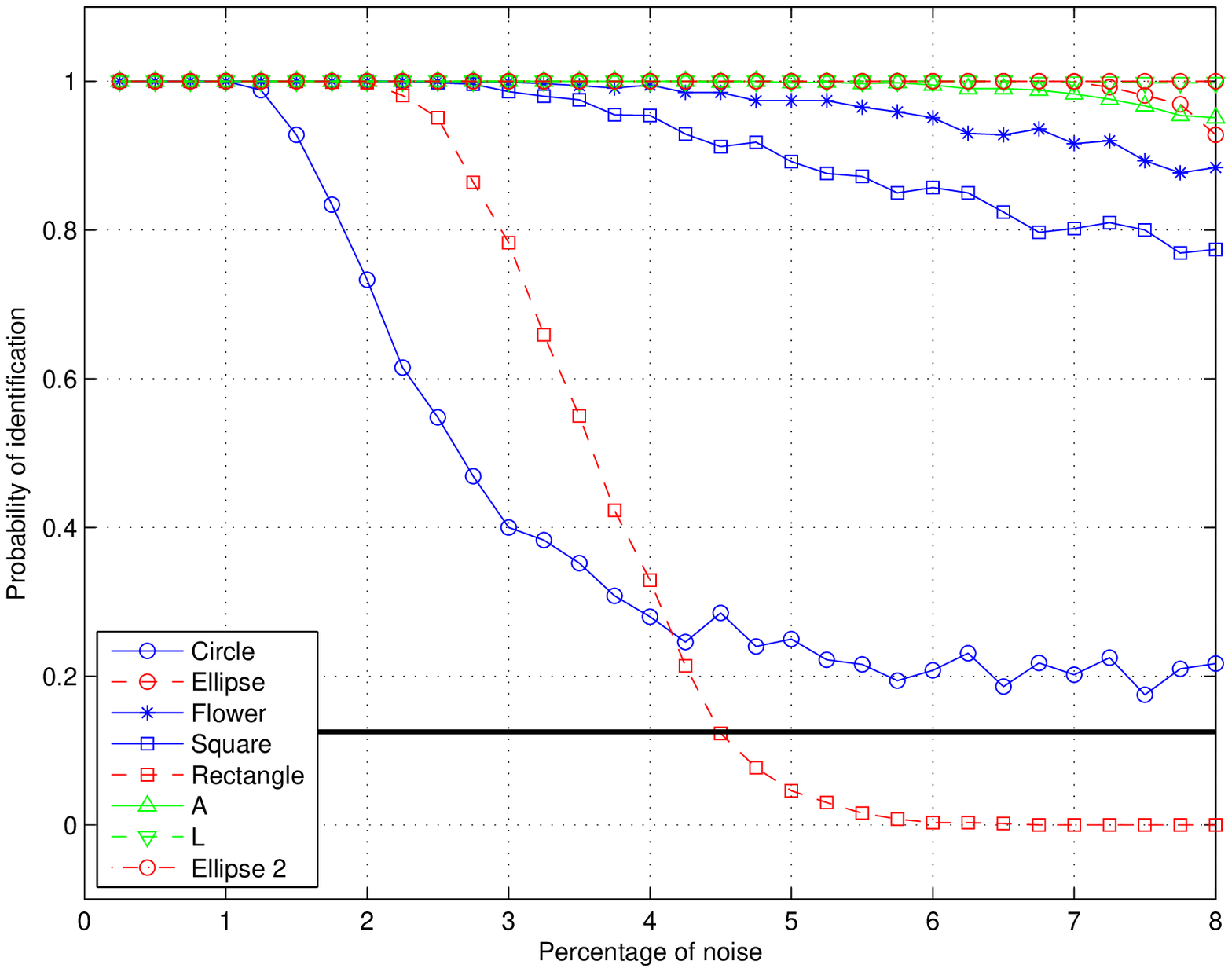}}
  \caption{Same experiment as in Figure \ref{fig:robustness_noise} (a) with the scales (a)
    $j=-1$ and (b) $j=-1,0$ only.}
  \label{fig:robustness_scale}
\end{figure}

% \paragraph{Robustness against deformation}
% \label{sec:robustn-against-defo}

% Finally, we test the robustness of the proposed method against shape deformation. This is a
% desired property since the precomputed dictionary corresponds to some ideal models, while
% objects of the real world are often imperfect copies of the ideal one. The dictionary of shape
% descriptors is constructed for the shapes in Figure XXX as in previous case. For each shape we
% apply three types of deformation which are cracks, global perturbations and dammages, as
% illustrated in Figure XXX. On these deformed shapes we simulate data and apply the
% identification method as above. The results of identification with a limited angle of view
% $\alpha=\pi/16$ at the noise level $\snoiseper=100\%$ are show in Figure XXX.

\section{Discussion and conclusion}
\label{sec:conclusion}

In this paper we presented a new time domain multi-scale method of shape identification
for electro-sensing using pulse-type signals. The method is based on transform-invariant
shape descriptors which are computed from the filtered polarization tensor at
multi-scales, and enjoys a remarkable robustness even in a highly noisy environment with
far field transmitters of very small angle of view. Time domain data, acquired using
pulses of different scales, contain information about the target at different frequency
bands, and allow a better distinction of shapes than using a single scale. The new method
improves also the results of the multi-frequency approach proposed in
\cite{ammari_shape_2014}. We reported here only results on conductive objects ($\sigma\gg
\sigma_0$, compared to the surrounding water), while a similar performance can also be
observed on resistive objects ($\sigma\ll \sigma_0$) and in this case one needs to adapt
the range of the scales %$\set{\jmin\ldots\jmax}$
to the new physical values in order to obtain good distinguishability between shapes.  The
new method can also be generalized to the modeling of electric fish in
\cite{ammari_modeling_2013} and this will be the subject of a forthcoming paper. We also
plan to optimize the pulse shape for a given dictionary of targets. Finally, it is
expected that the proposed time-domain multi-scale algorithm can be extended to shape
identification and classification in echolocation \cite{han_echo} and in imaging from
induction data \cite{junqing1, junqing2}.

% Finally, it is worth noting that when working in a bounded domain or using the modeling of
% electric fish in \cite{ammari_modeling_2013}, one wants to eliminate the background field
% $U$ (which is considered in this paper as perfectly known) from the data in order to
% reduce the error and computation charge, then it is unclear if the technique for the
% elimination of $U$ proposed in \cite{ammari_shape_2014} still applies here.

% \section{Estimation of the physical parameters}
% Since $\abs{\lambda(\omega)}>1/2$ and the spectrum of $\Kstar D$ is included in $[-1/2, 1/2]$,
% we can approximate the operator $\loKstari D$ by a Neumann series:
% \begin{align*}
%   \loKstari D \simeq \sum_{j= 0}^J \lambda(\omega)^{-1-j}\Paren{\Kstar D}^j,
% \end{align*}
% where $J$ is the order of truncation. On the other hand, note that 
% \begin{align*}
%   \lambda(\omega)^{-1} = 2\Paren{1- \frac 2 {\sigma+1+i\e\omega}},
% \end{align*}
% injecting in the definition of $\wMcs$ yields
% \begin{align*}
%   \wMcs(\omega) &\simeq \sum_{j=0}^J \hh(\omega) \lambda(\omega)^{-1-j} \underbrace{\intbd D
%   {S_m(y) \Paren{\Kstar D}^j[\p_\nu C_m](y)} y}_{X_j} \\
%   & = \hh(\omega)\sum_{k=0}^{J+1} \alpha_k
%   (\sigma+1+i\e\omega)^{-k}, % = \hh(\omega)\sum_{k=0}^{J+1} \alpha_k \kappao^k,
% \end{align*}
% where the coefficient $\alpha_k$ is given by
% \begin{align*}
%   \alpha_k = \Paren{-2}^k \sum_{j=\max(k-1,0)}^J \binom{j+1}{k}\beta^{1+j}X_j
% \end{align*}

\appendix

\section{Numerical solution of the forward problem}
\label{sec:numer-solut-forw}

We aim to simulate the perturbation $u(t,x)-U(t,x)$ for $t\in [0, T]$ using the representation
\eqref{eq:repr_solution}. We will solve the system \eqref{eq:repr_vphi_func_fulfill}
on the time interval $[0, T]$ under the initial condition $\vphi(0)=0$ (since $\vphi$ is causal)
by combining a boundary element method (BEM) in space and a finite difference scheme in
time.

The time interval $[0,T]$ is equally divided into $N$ parts with the time step $\Dt=T/N$ and we
denote by $\vphi^n(x)=\vphi(n\Dt,x)$ for $n=0\ldots N$, so that it holds approximately
\begin{equation*}
  \vphi'(n\Dt, x) \simeq \frac{\vphi^{n}(x) -\vphi^{n-1}(x)}{\Dt} \ \text{ for a.e. } x\in\p D.
\end{equation*}
The same discretization in time is applied to  term on the right-hand side, $(1+\alpha\p_t)\frac{\partial  U}{\partial \nu}$, and
we write $b=\frac{\partial U}{\partial \nu}$. Inserting these into \eqref{eq:repr_vphi_func_fulfill} and after some
simple manipulations, we get
\begin{align}
  \label{eq:psin_dtime}
  \Paren{\tilde\lambda I - \Kstar D}\Brack{\vphi^n}
  = b^n + {\frac {\alpha}{\Dt+\alpha}} \Paren{\hKstarf D {\vphi^{n-1}} - b^{n-1}}
\end{align}
with $\tilde \lambda = \frac{\e/\Dt + \sgm + 1}{2(\e/\Dt + \sgm -1)}$, and the operator
$(\tilde \lambda I - \Kstar D)$ is clearly invertible on $\LtpD$. In the space domain (with the
time being fixed), $\Pz$ elements are used for the discretization of $L^2(\p D)$ function. Let
$x(\theta)$ be the parameterization of the boundary $\p D$ with $\theta\in[0,1]$. We denote by
$\vphi^n_j=\vphi(n\Dt, x(\theta_j))$ the $j$-th coefficient of $\vphi(n\Dt)$ under the $\Pz$
basis, and by $\Albd,\Ahlf$ the matrix representation of $(\tilde \lambda I - \Kstar D),
\hKstar D$ under $\Pz\times\Pz$ basis. Denoting by $\bphi^n=(\vphi^n_j)_j, \bb^n=(b^n_j)_j$ the
discrete coefficient vector, finally the time-space discretization yields the following linear
system for $n=1\ldots N$:
\begin{align}
  \label{eq:psinj_dtime_dspace}
  \Albd \bphi^n = \bb^n + {\frac {\alpha}{\Dt+\alpha}} \Paren{\Ahlf \bphi^{n-1} - \bb^{n-1}}
\end{align}
with the initial state $\bphi^0 = 0$. Then \eqref{eq:psinj_dtime_dspace} is inverted
iteratively for $n=1\ldots N$ and we inject the solution $\set{\bphi^0,\ldots,\bphi^n}$ into
\eqref{eq:repr_solution} to get the desired data by evaluating the single layer potential.

%% Multi-objects
% In case of multiple inclusions, the discretization for the system is very similar to what
% explained above and gives, for all $l=1\ldots L$:
% \begin{align}
%   \label{eq:psinj_dtime_dspace}
%   \Albdl \bphi^n_l - \sum_{k\neq l} \Skdbl \bphi^{n}_k = \bb^n_l + {\frac {\alpha_l}{\Dt+\alpha_l}} \Paren{\Ahlfl \bphi^{n-1}_l -
%   \bb^{n-1}_l - \sum_{k\neq l} \Skdbl \bphi^{n-1}_k}
% \end{align}
% where $\Albdl,\Ahlfl$ is respectively the matrix representation of $(\tilde \lambda_l I -
% \Kstar{D_l}), \hKstar{D_l}$ under $\Pz\times\Pz$ basis and $\Skdbl$ is the matrix of
% $\frac{\p}{\p\nu_l}\Sgl {D_k}$.

\bibliographystyle{plain}
\bibliography{Biblio}

\end{document}